\DeclareMathOperator{\Div}{div}
\newcommand{\AP}{A_{\Vert}}
\newcommand{\ColorWord}[2]{\color{#1} #2 \color{black} }
\numberwithin{equation}{section}
\theoremstyle{plain}
\newtheorem{thm}[equation]{Theorem}
\newcommand{\refthm}[1]{\emph{\ColorWord{blue}{Theorem} \ref{#1}}}
\newtheorem{lemma}[equation]{Lemma}
\newcommand{\reflemma}[1]{\emph{\ColorWord{blue}{Lemma} \ref{#1}}}
\newtheorem{prop}[equation]{Proposition}
\newcommand{\refprop}[1]{\emph{\ColorWord{blue}{Proposition} \ref{#1}}}
\newtheorem{cor}[equation]{Corollary}
\newcommand{\refcor}[1]{\emph{\ColorWord{blue}{Corollary} \ref{#1}}}
\theoremstyle{definition}
\newtheorem{defin}[equation]{Definition}
\newcommand{\refdef}[1]{\emph{Definition \ref{#1}}}
\theoremstyle{remark}
\newtheoremstyle{named}{}{}{\itshape}{}{\bfseries}{}{.5em}{#1 #3}
\theoremstyle{named}
\title{The regularity problem with a weaker condition on only the transversal direction}
\author{Martin Ulmer}
\date{\today}
\begin{document}

\maketitle

\begin{abstract}
\noindent
We study an elliptic operator \(L:=\mathrm{div}(A\nabla \cdot)\) on the upper half space. It is known that if the matrix \(A\) is independent in the transversal \(t\)-direction, then the regularity boundary value problem is solvable with data in a Sobolev space. In the present paper we improve on the \(t\)-independence condition by introducing a mixed \(L^1-L^\infty\) condition that only depends on \(\partial_t A\), the derivative of \(A\) in transversal direction. This condition is different from other conditions in the literature and has already been proven to imply solvability of the Dirichlet boundary value problem.
\end{abstract}

\tableofcontents

\section{Introduction}

In this work let \(\Omega:=\mathbb{R}^{n+1}_+:=\mathbb{R}^n\times (0,\infty), n\geq 1\) be the upper half space and \(\mathcal{L}:=\Div(A\nabla\cdot)\) an uniformly elliptic operator with bounded measurable coefficients, i.e. \(A(x,t)\) is a real not necessarily symmetric \(n+1\) by \(n+1\) matrix and there exists \(\lambda_0>0\) such that
\begin{align}
    \lambda_0 |\xi|^2\leq \xi^T A(x,t) \xi \leq \lambda_0^{-1}|\xi|^2 \qquad \textrm{ for all }\xi\in \mathbb{R}^{n+1},\label{eq:DefinitionOfUniformElliptic}
\end{align}
and a.e. \((x,t)=(x_1,...,x_n,t)\in \mathbb{R}^{n+1}_+\). We are interested in the solvability of the regularity boundary value problem 
\[\begin{cases} \mathcal{L}u=\Div(A\nabla u)=0 &\textrm{in }\Omega, \\ u=f &\textrm{on }\partial\Omega,\end{cases}\]
with boundary data \(f\) in the Sobolev space \(W^{1,p}(\partial\Omega)\) (see \refdef{def:L^pregularityProblem}). In contrast to the Dirichlet boundary value problem with boundary data in \(L^p\), the regularity problem also captures the behavior of the tangential derivative of the solution \(u\) on the boundary.
\medskip

The question we would like to pose is quite well established and goes back to Dahlberg (cf. \cite{dahlberg_estimates_1977} and \cite{dahlbert_poisson_1979}). He showed the existence of solutions to the Dirichlet boundary value problem for the Laplacian and boundary data in \(L^2\) on Lipschitz graph domains. More interestingly however, the following observation appeared: 
It turns out that finding harmonic functions with \(L^2\) boundary data on a Lipschitz graph domain \(\Omega\) for
\[\begin{cases} \Delta u=0 &\textrm{in }\Omega\\ u=f &\textrm{on } \partial\Omega\end{cases}\qquad \textrm{is equivalent to solving } \qquad \begin{cases} Lu=0 &\textrm{in }\mathbb{R}^n\times(0,\infty)\\  u= f &\textrm{on } \mathbb{R}^n\end{cases}\]
for an elliptic operator \(L\) depending on the Lipschitz function of \(\Omega\). This means that solving an easy elliptic PDE on a harder Lipschitz graph domain is equivalent to solving a harder elliptic PDE on the easier domain of the upper half space.
Depending on which flattening one uses in the argument, the operator \(L\) does have some additional properties, and the most established ones are the Carleson condition (or also Dahlberg-Kenig-Pipher (DKP) condition) and the \(t-\)independence condition. The literature regarding the DKP condition is rich and not exclusive to the Dirichlet boundary value problem, but finds applications in a range of different elliptic or parabolic boundary value problems (cf. the survey article \cite{dindos_boundary_2022} and references therein). Similarly, the \(t-\)independent condition gave rise to solvability of other boundary value problems (cf. \cite{kenig_new_2000}, \cite{kenig_regularity_2008}, \cite{hofmann_regularity_2015}, \cite{hofmann_regularity_2015}, \cite{castro_boundedness_2016}, \cite{nystrom_l2_2017}, \cite{auscher_dirichlet_2018}, \cite{hofmann_dirichlet_2022}). Broadly speaking, the DKP condition controls the Lipschitz constant of the matrix \(A\) close to the boundary and asks it to grow less than the function \(1/t\) close to the boundary. The \(t-\)independence condition on the other hand requires \(A(x,t)=A(x)\), i.e. that \(A\) is independent in transversal direction with potentially almost arbitrarily bad behavior in any direction parallel to the boundary.
\\

Both of these structurally different conditions, the DKP and \(t\)-independent condition, are sufficient conditions for solvability of the boundary value problems, and hence we can wonder whether there are other sufficient conditions or improvements of these two, that still allow us to solve boundary value problems. Finding improvements will narrow the amount of operators for which we do not know whether any boundary value problem is solvable. It is also noteworthy that \cite{modica_construction_1980} and \cite{caffarelli_completely_1981} provide examples of linear uniformly elliptic operators for which the Dirichlet boundary value problem is not solvable. Furthermore, the article \cite{kenig_new_2000} demonstrates that if the matrix \(A\) is nonsymmetric, we cannot expect to obtain \(L^2\) solvability for the Dirichlet problem with \(t-\)independent coefficients, whence the best we can hope for is solvability with \(L^p\) data for potentially large \(p\). Since the solvability range of the regularity problem is dual to that of the Dirichlet problem, in our case we can only expect solvability for potentially small \(p>1\).
\medskip

The extension from symmetric to nonsymmetric matrices in the Dirichlet and regularity boundary value problem required tools provided by the resolved Kato conjecture (cf. \cite{Auscher_Kato}) which was previously open for almost 50 years. Despite that the \(t-\)independent condition for the Dirichlet problem in the symmetric case was well known since Jerison and Kenig used a ``Rellich" identity in \cite{jerison_dirichlet_1981}, it took the Kato tools until the authors of \cite{hofmann_square_2015} could prove the nonsymmetric case.
\medskip

Apart from the above two mentioned conditions there is also the Dini-condition from \cite{fabes_necessary_1984}, where they showed that \(t\)-independence can be relaxed if we use continuous coefficients. More precisely, they show that if a symmetric \(A\) has continuous coefficients, \(\Omega\) is a bounded \(C^1\)-domain, and the modulus of continuity 
\[\eta(s)=\sup_{P\in \partial\Omega, 0<r<s}|A_{ij}(P-rV(P))-A_{ij}(P)|\]
with outer normal vector field \(V\) satisfies the Dini-type condition
\begin{align}\int_0\frac{\eta(s)^2}{s}ds<\infty,\label{DiniTypeCond}\end{align}
then \(\omega\in B_2(\sigma)\subset A_\infty(\sigma)\). Together with the counterexample in \cite{jerison_dirichlet_1981}, where completely singular measures \(\omega\) with respect to the surface measures are constructed for a given \(\eta\) with \(\int_0 \frac{\eta(s)^2}{s}ds=+\infty\), the Dini type condition (\ref{DiniTypeCond}) turns out to be sufficient for \(\omega\in A_\infty(d\sigma)\) and kind of necessary, if \(A\) is symmetric with continuous bounded coefficients. A little bit later, \cite{dahlberg_absolute_1986} extended this result to symmetric matrices with merely bounded and measurable coefficients by considering perturbations from continuous matrices.
\medskip

The condition we would like to study in the article in the mixed \(L^1-L^\infty\) condition that appears in \cite{ulmer_mixed_2024}. The condition is stated as
\begin{align}
\int_0^\infty \Vert \partial_t A(\cdot, t)\Vert_{L^\infty}dt<\infty. \label{cond:mixedL1LInftyCond}
\end{align}
From \cite{ulmer_mixed_2024} we know that if \(|\partial_t A|\leq C/t\) and \eqref{cond:mixedL1LInftyCond} is satisfied, then the Dirichlet problem is solvable for some \(p<\infty\). It is easy to see that this condition is a generalization of the \(t-\)independent condition, but it is also different from the Dini-condition. Even if the Dini-condition holds true as sufficient condition for unbounded domains and nonsymmetric matrices and the regularity problem, which is not established yet, we can find examples of matrices in \cite{ulmer_mixed_2024} that satisfy either condition but not both. Interestingly for \(n=1\), the same methods also yield an improvement. In the upper half plane, we only need to assume an \(L^1\)-Carleson condition on \(|\partial_t A|\) instead of \eqref{cond:mixedL1LInftyCond} (cf. \cite{ulmer_solvability_2025}).
\medskip

Since these new conditions yield solvability for the Dirichlet problem it is natural to ask whether they also allow us to solve the regularity problem. This is our main result

\begin{thm}\label{MAINTHM}
    Assume \(L:=\mathrm{div}(A\nabla \cdot)\) is uniformly elliptic operator with bounded, merely measurable coefficients and let \(\Omega=\mathbb{R}^{n+1}_+\). If \(A\) satisfies \(|\partial_t A|\leq C/t\) and \eqref{cond:mixedL1LInftyCond} then there exists \(p>1\) such that \eqref{eq:RPinequalityElliptic} holds and hence the regularity boundary value problem is solvable for \(f\in \dot{L}^p_1(\partial\Omega)\).
\end{thm}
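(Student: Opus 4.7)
The strategy is to reduce the problem to the regularity theory for the $t$-independent operator and handle the discrepancy with a Carleson-measure argument driven by $\partial_t A$. By \eqref{cond:mixedL1LInftyCond} the $L^\infty$-valued map $t\mapsto A(\cdot,t)$ is absolutely continuous on $(0,\infty)$, so the boundary trace $A_0(x):=\lim_{t\downarrow 0}A(x,t)$ exists in $L^\infty(\mathbb{R}^n)$ and is a uniformly elliptic, genuinely $t$-independent matrix. The operator $L_0:=\Div(A_0\nabla\cdot)$ therefore has a solvable regularity problem in $\dot L^{p_0}_1$ for some $p_0>1$ by Hofmann--Kenig--Mayboroda--Pipher, so it suffices to transfer solvability from $L_0$ to $L$.

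The key quantitative input, already central in \cite{ulmer_mixed_2024}, is that the combination of $|\partial_t A|\le C/t$ and \eqref{cond:mixedL1LInftyCond} promotes $|\partial_t A(x,t)|^2\,t\,dx\,dt$ to a Carleson measure: pointwise one has
\[
|\partial_t A(x,t)|^2 \, t \;\le\; C\,\Vert \partial_t A(\cdot,t)\Vert_{L^\infty},
\]
and the right-hand side is $L^1$ in $t$ by hypothesis. Combined with the pointwise bound $|A(x,t)-A_0(x)|\le \int_0^t \Vert\partial_s A(\cdot,s)\Vert_{L^\infty}\,ds$, which tends to $0$ as $t\downarrow 0$, this provides exactly the data needed for a perturbation argument. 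The plan is to mimic the Dirichlet proof of \cite{ulmer_mixed_2024} but tailored to $\nabla u$: for a weak solution $u$ with boundary data $f\in \dot L^p_1$, I would derive a Rellich-type identity by pairing $Lu=0$ with a suitable transversal test vector field, then absorb the error produced by $\partial_t A\neq 0$ using Cauchy--Schwarz against the Carleson measure above and the square function of $\nabla u$.

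The main obstacle will be upgrading the resulting square-function bound on $\nabla u$ to the non-tangential maximal function estimate that constitutes the regularity inequality. This conversion is classically delicate for the regularity problem because one must control $\nabla u$ itself rather than $u$. I would exploit the near-boundary smallness $\Vert A(\cdot,t)-A_0\Vert_{L^\infty}\to 0$ as $t\downarrow 0$ to invoke a small-Carleson perturbation theorem on a thin slab $\{0<t<\eta\}$, use interior Caccioppoli and Moser bounds on $\{t\ge \eta\}$ to handle the complementary region, and then splice the two pieces together and take $\eta\downarrow 0$. Combined with a good-$\lambda$ inequality, this should yield the estimate for some $p>1$, necessarily close to $1$, as expected in view of the duality with the Dirichlet problem which is only solvable for potentially large exponents.
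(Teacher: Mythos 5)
There is a genuine gap, and it lies at the heart of your strategy: the reduction of $L$ to the $t$-independent operator $L_0=\Div(A_0\nabla\cdot)$ by perturbation does not work under the stated hypotheses. Every known perturbation theorem for the Dirichlet or regularity problem (Fefferman--Kenig--Pipher, Kenig--Pipher, and their descendants) requires the \emph{discrepancy} $a(x,t)=\sup_{B((x,t),t/2)}|A-A_0|$ to satisfy a Carleson condition, i.e. finiteness (or smallness) of $\sup_\Delta \frac{1}{|\Delta|}\int_{T(\Delta)}\frac{a(y,t)^2}{t}\,dy\,dt$. From \eqref{cond:mixedL1LInftyCond} all you can extract is $|A(x,t)-A_0(x)|\le \epsilon(t):=\int_0^t\Vert\partial_s A(\cdot,s)\Vert_{L^\infty}ds$ with $\epsilon(t)\to 0$, and the resulting Carleson norm is controlled only by $\int_0^r \epsilon(t)^2\,\frac{dt}{t}$, which is essentially the Dini-square condition \eqref{DiniTypeCond} --- a condition the paper explicitly points out is \emph{not} implied by \eqref{cond:mixedL1LInftyCond}. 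Concretely, $A(x,t)=(1+f(t))I$ with $f(t)=1/\ln(-\ln t)$ near $t=0$ (the example from \cite{ulmer_mixed_2024} quoted in the introduction) satisfies $|\partial_t A|\le C/t$ and \eqref{cond:mixedL1LInftyCond}, yet $\int_0 f(t)^2\,dt/t=\infty$, so the discrepancy measure has infinite Carleson norm on \emph{every} box, no matter how thin the slab $\{0<t<\eta\}$. Hence neither the large- nor the small-Carleson perturbation theorem applies, and the fallback you suggest --- smallness of $\Vert A(\cdot,t)-A_0\Vert_{L^\infty}$ near the boundary --- is not a usable substitute: mere $L^\infty$-closeness of coefficients does not preserve solvability (continuous coefficients can already produce singular elliptic measure, as in the counterexamples cited in the introduction). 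The Carleson measure you do obtain, $|\partial_t A(x,t)|^2\,t\,dx\,dt$, involves only the transversal derivative and therefore feeds neither into the DKP machinery (which needs the full gradient) nor into the perturbation framework (which needs $A-A_0$).

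For contrast, the paper does not pass through $L_0$ at all. It proves \eqref{eq:RPinequalityElliptic} directly by a duality argument: a Kenig--Pipher dualising field $h$, the adjoint Poisson solution $L^*v=\Div h$ with square- and nontangential-function bounds coming from $\omega\in A_\infty$ (established in \cite{ulmer_mixed_2024} under the same hypotheses), and repeated integration by parts against the $t$-dependent semigroup $\mathcal{P}_t=e^{-t^2L^t_{\Vert}}$. The analytic core is then $L^p$ area-function control of $\partial_t\mathcal{P}_tf$, $t\nabla_{\Vert}\partial_t\mathcal{P}_tf$ and $tL_{\Vert}\mathcal{P}_tf$, obtained by tent-space real interpolation between the $L^2$ square-function bounds of \cite{ulmer_mixed_2024}, $L^\infty$ Carleson-function bounds, and estimates on homogeneous Hardy--Sobolev atoms; it is precisely in these estimates that \eqref{cond:mixedL1LInftyCond} and $|\partial_t A|\le C/t$ are used, in place of any Carleson control of $A-A_0$. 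Your proposal would need an entirely different mechanism to exploit the $L^1$--$L^\infty$ condition; as written, the reduction step fails and the subsequent Rellich/good-$\lambda$ outline has nothing to rest on.
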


Since the mixed \(L^1-L^\infty\) condition implies the \(t-\)independent condition, the presented proof is also an alternative proof of solvability of the regularity problem for \(t\)-independent operators. This result has been proved in \cite{hofmann_regularity_2015} and our proof only relies on integration by parts techniques and all the tools the resolved Kato conjecture provides including the semi group theory, but does not need single layer potentials like \cite{hofmann_regularity_2015}. To prove the mixed \(L^1-L^\infty\) condition we will have to bound area function estimates involving the operator \(\mathcal{P}_t:=e^{-t^2L_{||}^t}\) but if we only want to see the result for \(t-\) independent operators, we can replace this operator by the same \(\mathcal{P}_t:=e^{-t^2L_{||}}\) that is used in \cite{hofmann_regularity_2015}. Then all the required estimates on \(\mathcal{P}_t\) are already given and used in \cite{hofmann_regularity_2015}, and our proof is an alternative to the proof presented in \cite{hofmann_regularity_2015}.
\medskip

The main part through chapters 3 to 5 consists of proving \(L^p\) boundedness of the area function involving terms like \(\mathcal{P}_t, \partial_t\mathcal{P}_t\), or \(\nabla_{||}\mathcal{P}_t\). Recall that for the Dirichlet problem in \cite{ulmer_mixed_2024} we needed the same bounds but only in \(L^2\), and hence chapter 3 to 5 extend the study of this new operator \(\mathcal{P}_t\). The proof of the main theorem can be found in the last chapter. 

\section*{Acknowledgements}
The author expresses gratitude to Jill Pipher and Martin Dindo\v{s} for many insightful discussions in this topic that helped with refining the applied methods. 

\section{Preliminaries}

\subsection{Notation and Setting}
We will work on the upper half space \(\Omega=\mathbb{R}^{n+1}_+=\mathbb{R}^n\times (0,\infty)\), where we call the last component the \(t\)-direction or component and define
\begin{itemize}
    \item a boundary cube centered at \(x\in\partial\Omega\) with radius \(r\) by \(\Delta_r(x)=\Delta(x,r):=B(x,r)\cap\partial\Omega\);
    \item a Carleson region over a boundary ball as \(T(\Delta_r(x)):=B(x,r)\cap\Omega\);
    \item the matrix \(A(x,t)\in\mathbb{R}^{(n+1)\times(n+1)}\) to be bounded and elliptic, i.e. there exists \(\lambda>0\) with
    \[\lambda|\xi|^2\leq \xi^T A(x,t)\xi\leq |\xi|^2\qquad\textrm{ for every }\xi\in \mathbb{R}^{n+1}, \textrm{ and a.e. }(x,t)\in\Omega.\]
    Here \(A\) is potentially nonsymmetric, and we set the different components as
    \[A(x,t)=\begin{pmatrix} \AP(x,t) & b(x,t)\\c(x,t) & d(x,t)\end{pmatrix},\]
    where \(\AP(x,t)\in\mathbb{R}^{n\times n}, c(x,t)\in\mathbb{R}^{1\times n}, b(x,t)\in\mathbb{R}^{ n\times 1}, d(x,t)\in\mathbb{R}\). All of these components are functions in \((x,t)\).
    \item the operator \((L^t_{||})_{t>0}=(\mathrm{div}_{||}(\AP\nabla_{||} \cdot))_{t>0}\) as a family of elliptic operators.
    \item a nontangential cone as \(\Gamma_\alpha(x):=\{(y,t)\in\Omega; |x-y|<\alpha t\}\)
    with aperture \(\alpha>0\) 
    \item the mean-valued nontangential maximal function \[\tilde{N}_\alpha(F)(x)=\tilde{N}_\alpha(F)(x):=\sup_{(y,s)\in\Gamma_\alpha(x)}\fint_{B(y,s,s/2)}|F(z,\tau)|dzd\tau; \]
\end{itemize}

We set the area function of \(F\in L^2_{loc}\) as
\begin{align}\mathcal{A}(F)(x):=\Big(\int_{\Gamma_\alpha(x)}\frac{|F(x,t)|^2}{t^{n+1}}dxdt\Big)^{1/2},\label{recall:areafunctiondef}\end{align}
the square function as \(S(F)(x):=\mathcal{A}(tF)(x)\),
and the Carleson function as
\begin{align}\mathcal{C}(f)(x):=\Big(\sup_{r>0}\frac{1}{|\Delta(x,r)|}\int_{T(\Delta(x,r))}\frac{|f(y,t)|^2}{t}dydt\Big)^{1/2}\label{recall:carelsonfunctiondef}\end{align}
for \(x\in \partial\Omega=\mathbb{R}^n\), where \(\Delta(x,r)\) is a boudnary ball centered at \(x\in \partial\Omega\) with radius \(r>0\).
\medskip

Furthermore, we call a Borel function \(g:\partial\Omega\to\mathbb{R}\) a \textit{Haj\l{}asz upper gradient} of \(f:\partial\Omega\to\mathbb{R}\) if
\[|f(X)-f(Y)|\leq |X-Y|(g(X)+g(Y))\qquad\textrm{for a.e. }X,Y\in \partial\Omega.\]
We denote the collection of all Haj\l{}asz upper gradients of \(f\) as \(\mathcal{D}(f)\) and define \(\dot{L}_1^p(\partial\Omega)\) by all \(f\) with
\[\Vert f\Vert_{\dot{L}^p_1(\partial\Omega)}:= \inf_{g\in\mathcal{D}(f)}\Vert g\Vert_{L^p(\partial\Omega)}<\infty.\]
This space is also called \textit{homogeneous Haj\l{}asz Sobolev space}. In the case of a flat boundary we have that \(\dot{L}_1^p(\mathbb{R}^n)=\dot{W}^{1,p}(\mathbb{R}^n)\) with comparable norms. 

We can define the regularity problem with boundary data in \(\dot{L}_1^p(\partial\Omega)\).

\begin{defin}[\((RP)_p^L\)]\label{def:L^pregularityProblem}
    We say the \textit{\(L^p\)-regularity boundary value problem} is solvable for \(L\) if for all boundary data for \(f\in C_C(\Omega)\cap \dot{L}^p_{1}(\partial\Omega)\) the unique solution \(u\in W_{loc}^{1,2}(\Omega)\) of
    \[\begin{cases}
        Lu=0&\Omega,
        \\
        u=f&\partial\Omega,
    \end{cases}\]
    satisfies
    \begin{align}\Vert \tilde{N}(\nabla u)\Vert_{L^p(\partial\Omega)}\lesssim\Vert f\Vert_{\dot{L}_{1}^p(\partial\Omega)},\label{eq:RPinequalityElliptic}\end{align}
    where the implied constants are independent of \(u\) and \(f\). In this case we also write that \((RP)^L_p\) holds for \(L\).
\end{defin}

\subsection{The approximation operator \(\mathcal{P}_t\)}
We set the operator \[\mathcal{P}_t:=e^{-t^2L_{||}^t}.\]
Without the dependence of the operator \(L_{||}^t\) on \(t\), this \(\mathcal{P}_t\) is the ellipticized heat semigroup, and solves the heat equation on the upper half space. Since this operator does not satisfy any PDE directly, we can decouple the dependencies in \(t\) and define
\[Wf(x,t,s):=e^{-tL_{||}^s}f(x)\]
as the solution to the ("t-independent") heat equation
\[\begin{cases} \partial_t Wf(x,t,s)-L_{||}^s Wf(x,t,s)=0 &(x,t)\in \Omega, \\ Wf(x,0,s)=f(x) &x\in\partial\Omega\end{cases}\]
for fixed \(s>0\). Taking the partial derivative of \(Wf\) in \(s\) yields
\[\begin{cases} \partial_t \partial_sWf(x,t,s)-L_{||}^s \partial_s Wf(x,t,s)=\mathrm{div}_{||}(\partial_s \AP(x,s)\nabla_{||}Wf(x,t,s)), &(x,t)\in \Omega, \\ Wf(x,0,s)=0, &x\in\partial\Omega.\end{cases}\]
By Duhamel's principle, we can obtain an explicit formula for \(\partial_s Wf(x,t,s)\). Hence we obtain
\begin{align*}
\partial_t\mathcal{P}_tf(x)&=[\partial_tWf(x,t^2,s) + \partial_sWf(x,t^2,s)]|_{s=t}
\\
&=2tL_{||}^te^{-t^2L_{||}^t}f(x) + \int_0^t2\tau e^{(t^2-\tau^2)L_{||}}\mathrm{div}(\partial_t\AP(x,t)\nabla_{||}e^{-\tau^2L_{||}^t}f(x))d\tau
\\
&=:W_1f(x,t) + W_2f(x,t).
\end{align*}
This argument is the same that was already presented in \cite{ulmer_mixed_2024}. Please note that if the operator \(L_{||}^t\) is independent of \(t\), then \(W_2f\equiv 0\).
\medskip

When clear from context and when there is no chance of confusing the present derivative with the full one is, we will drop the subscript \(||\).

\subsection{Kernel bounds and \(L^2-L^2\) off-diagonal estimates}

We have the following bounds of the kernel \(K_t(x,y)\) of the semigroup \(e^{tL_{||}}\).

\begin{prop}[Prop 4.3 in \cite{hofmann_lp_2022} or Theorem 6.17 in  \cite{ouhabaz_analysis_2004}]\label{Prop:KernelBounds}
    For any \(l\in\mathbb{N}\) there exists \(C=C(n,\lambda,l), \beta=\beta(n,\lambda)>0\) such that
    \begin{align}
        |\partial_t^l K_t(x,y)|\leq C_l t^{-\frac{n}{2}-l}e^{-\beta\frac{|x-y|^2}{t}}
    \end{align}
    for all \(x,y\in\mathbb{R}^n\).
\end{prop}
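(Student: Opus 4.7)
The plan is to combine the classical Davies perturbation technique with the ultracontractivity of the semigroup and the holomorphy of $e^{-tL_{||}}$ in the time variable. Throughout we treat $L_{||}$ as a fixed uniformly elliptic operator on $\mathbb{R}^n$ with bounded measurable coefficients, so all constants depend only on $n$, $\lambda$, and $l$. The first step is an $L^2$ off-diagonal (Gaffney-type) estimate
\[\|e^{-tL_{||}}f\|_{L^2(F)}\le C\,e^{-\beta d(E,F)^2/t}\|f\|_{L^2(E)}\]
whenever $\supp f\subset E$. The standard energy argument gives this: for a bounded Lipschitz weight $\varphi$ with $\|\nabla\varphi\|_\infty\le\alpha$, differentiating $\|e^{\varphi}u(t)\|_{L^2}^2$ along $u(t):=e^{-tL_{||}}f$, integrating by parts, and using ellipticity of $\AP$ yields a Gronwall bound $\|e^{\varphi}u(t)\|_{L^2}\le e^{C\alpha^2 t}\|e^{\varphi}f\|_{L^2}$. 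Choosing $\varphi\approx\alpha\,d(\cdot,E)$ and optimizing in $\alpha$ produces the Gaussian decay.

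Second, a Nash inequality for the Dirichlet form associated with $\AP$ gives the ultracontractive bounds $\|e^{-tL_{||}}\|_{L^1\to L^2}\le Ct^{-n/4}$ and, dually via the adjoint semigroup, $\|e^{-tL_{||}}\|_{L^2\to L^\infty}\le Ct^{-n/4}$. Splitting $e^{-tL_{||}}=e^{-tL_{||}/3}e^{-tL_{||}/3}e^{-tL_{||}/3}$ and letting the outer factors convert the $L^2$ off-diagonal Gaussian bound of the middle factor into a pointwise bound on the integral kernel yields the $l=0$ case
\[|K_t(x,y)|\le Ct^{-n/2}e^{-\beta|x-y|^2/t}.\]

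For $l\ge 1$ I would exploit sectoriality: since $L_{||}$ is sectorial, $e^{-zL_{||}}$ extends holomorphically to a sector $\{|\arg z|<\theta_0\}$, and the Davies argument, now carried out with a complex Lipschitz weight, extends the Gaussian kernel bound to complex times with $\Re z$ replacing $t$ in the exponent. Cauchy's formula
\[\partial_t^l K_t(x,y)=\frac{l!}{2\pi i}\oint_{|z-t|=\gamma t}\frac{K_z(x,y)}{(z-t)^{l+1}}\,dz\]
with $\gamma<\tfrac12\sin\theta_0$ converts the $l$ derivatives into a factor of $t^{-l}$, and since $\Re z\gtrsim t$ on the contour the Gaussian constant in the exponent degrades only mildly and can be absorbed into a smaller $\beta$. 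The main technical obstacle is the complex-time extension of the off-diagonal bound used in this last step; this is the essential technical content of the proposition, and is standard in the Ouhabaz framework and carried out in full in the cited references.
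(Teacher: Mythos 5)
The paper does not prove this proposition at all---it is imported verbatim from \cite{hofmann_lp_2022} and \cite{ouhabaz_analysis_2004}---and your sketch follows essentially the same route those references take: Davies' perturbation for the $L^2$ off-diagonal bounds, Nash-type ultracontractivity, the holomorphic extension of the semigroup to a sector with complex-time Gaussian bounds, and Cauchy's formula for the $t$-derivatives (which correctly leaves $\beta$ independent of $l$). The only loose point is the three-fold splitting $e^{-tL_{||}}=e^{-tL_{||}/3}e^{-tL_{||}/3}e^{-tL_{||}/3}$: the outer factors destroy supports, so this composition does not directly yield a pointwise Gaussian bound; the standard fix, used in the cited sources and consistent with the Davies method you invoke at the outset, is to carry the weight $e^{\alpha\varphi}$ through the whole semigroup (the twisted semigroup is itself ultracontractive with bound $Ct^{-n/2}e^{c\alpha^2 t}$) and then optimize in $\alpha$.
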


The kernel bounds give rise to the following local bounds
\begin{prop}[Prop 11 in \cite{hofmann_dirichlet_2022} and Cor. 5.6 in \cite{ulmer_mixed_2024}, proof of Lemma 6.4 in \cite{ulmer_mixed_2024}]\label{prop:PROP11}
    Let \(x\in \partial\Omega=\mathbb{R}^n\) and \((y,t)\in\Gamma_\alpha(x)\), then there exists \(C=C(n,\lambda,\alpha)>0\) such that
    \begin{enumerate}[(i)]
        \item \begin{align}
        W_1f(y,t)=tL_{||}e^{-t^2L_{||}^t}f(y,t)\leq C M[\nabla_{||}f](x),
    \end{align}
    \item \begin{align}
        e^{-t^2L_{||}^t}(f-(f)_{\Delta_{\alpha t/2}(x)})(y,t)\leq C M[\nabla_{||}f](x),
    \end{align}
    \item \begin{align}
        \fint_{\Delta_{\alpha t/2}(x)}|\nabla_{||} e^{-t^2L_{||}^t}f(z,\tau)|^2dz d\tau\leq C M[\nabla_{||}f]^2(x)\label{eq:NablaBounded}.
    \end{align}
    \end{enumerate}
\end{prop}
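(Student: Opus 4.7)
The three bounds are standard consequences of the Gaussian kernel estimates in Proposition 2.2, combined with the conservation property $e^{-\tau L^t_{||}}(c)=c$ for constants $c$ (which follows because $L^t_{||}$ is in divergence form, so it annihilates constants, and consequently $L^t_{||}\,e^{-\tau L^t_{||}}c=0$). I would prove (i) and (ii) first by kernel methods, then derive (iii) from (ii) via a Caccioppoli-type argument applied to the semigroup evolution.

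\textbf{Step 1, proof of (i).} Using $\partial_\tau e^{-\tau L^t_{||}}=-L^t_{||}e^{-\tau L^t_{||}}$, the kernel of $tL^t_{||}e^{-t^2 L^t_{||}}$ is $-t\,\partial_\tau K_\tau(y,z)\big|_{\tau=t^2}$, which by Proposition 2.2 (with $l=1$) is bounded pointwise by $Ct^{-n-1}e^{-\beta|y-z|^2/t^2}$. By conservation, for any constant $c$,
\[
tL^t_{||}e^{-t^2 L^t_{||}}f(y)=\int\bigl(-t\,\partial_\tau K_\tau(y,z)\bigr)\big|_{\tau=t^2}\bigl(f(z)-c\bigr)\,dz.
\]
Choose $c=(f)_{\Delta_t(y)}$, decompose $\mathbb{R}^n$ into $\Delta_t(y)$ and the dyadic annuli $A_k=\Delta_{2^k t}(y)\setminus\Delta_{2^{k-1}t}(y)$, and bound each piece using Poincaré on $\Delta_{2^k t}(y)$ together with the telescoping estimate $|(f)_{\Delta_t(y)}-(f)_{\Delta_{2^k t}(y)}|\lesssim 2^k t\,M[\nabla_{||}f](x)$ (valid because $(y,t)\in\Gamma_\alpha(x)$ makes all these balls sit inside a comparable ball centered at $x$). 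Summing over $k$ the geometric series $\sum_k 2^{(n+1)k}e^{-c 4^k}$ absorbs the exponential decay and gives (i).

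\textbf{Step 2, proof of (ii).} The same decomposition and telescoping argument works, but now with the kernel $K_{t^2}(y,z)$ itself (bounded by $Ct^{-n}e^{-\beta|y-z|^2/t^2}$ by Proposition 2.2 with $l=0$), and starting from the explicit subtraction of $(f)_{\Delta_{\alpha t/2}(x)}$. The annular estimates are essentially the same as in Step 1 and give the stated bound.

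\textbf{Step 3, proof of (iii).} For the fixed outer parameter $t$, define $v(z,\tau):=e^{-\tau L^t_{||}}(f-c)(z)$, where $c=(f)_{\Delta_{\alpha t/2}(x)}$. Since $L^t_{||}$ is autonomous in $\tau$, the function $v$ solves the parabolic equation $\partial_\tau v=L^t_{||}v$ classically, so Caccioppoli on a space–time cylinder of radius $\sim t$ centered at $(y,t^2)\in\Gamma_\alpha(x)\times(0,\infty)$ gives
\[
\fint_{Q_r}|\nabla_{||} v|^2\,dz\,d\tau\;\lesssim\;r^{-2}\fint_{Q_{2r}}|v|^2\,dz\,d\tau.
\]
At $\tau=t^2$ the right–hand side is exactly the $L^2$ average of $e^{-\tau L^t_{||}}(f-c)$, which by Step 2 (applied at nearby points) is controlled by $M[\nabla_{||}f]^2(x)$. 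Unpacking the averages yields (iii).

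\textbf{Main obstacle.} The only genuinely delicate point is (iii): the parameter $t$ inside $L^t_{||}$ is held fixed while we average the spatial gradient over a space–time box whose $\tau$–variable has nothing to do with that $t$. This is what makes Caccioppoli applicable (the coefficients $\AP(\cdot,t)$ are frozen in $\tau$), but one has to be careful to invoke (ii) at the correct scale and base point so that the averaged $L^2$ estimate on $v$ transfers cleanly to the pointwise bound by the maximal function of $\nabla_{||}f$ at $x$. Parts (i) and (ii) are fairly mechanical once the conservation/Poincaré/annular decomposition template is set up.
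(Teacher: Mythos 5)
First, note that the paper does not prove this proposition at all: it is imported verbatim from Prop.~11 of \cite{hofmann_dirichlet_2022} and Cor.~5.6/Lemma~6.4 of \cite{ulmer_mixed_2024}, so what you wrote is a from-scratch reconstruction. Your Steps 1 and 2 are essentially the standard arguments behind those citations (kernel bounds for $\partial_\tau^l K_\tau$, conservation $e^{-\tau L^t_{||}}1=1$ so that a constant can be subtracted, annular decomposition plus Poincar\'e and telescoping of averages, with the cone condition $(y,t)\in\Gamma_\alpha(x)$ ensuring all balls are comparable to balls centered at $x$), and they are correct; the only thing I would tighten is the justification of conservation, which for merely bounded measurable real coefficients is a standard but not completely trivial fact about divergence-form semigroups rather than a one-line consequence of ``$L$ annihilates constants''.

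Step 3, however, has a genuine mismatch. Your parabolic Caccioppoli applied to $v(z,\tau)=e^{-\tau L^t_{||}}(f-c)(z)$ produces a bound on $\fint_{Q_r}|\nabla_{||}e^{-\tau L^t_{||}}(f-c)|^2\,dz\,d\tau$, i.e.\ an average over the \emph{auxiliary semigroup time} $\tau\approx t^2$ with the coefficient matrix frozen at $\AP(\cdot,t)$. That is not the quantity in \eqref{eq:NablaBounded} in either of its reasonable readings. Read as a single-slice spatial average $\fint_{\Delta_{\alpha t/2}(x)}|\nabla_{||}e^{-t^2L^t_{||}}f(z)|^2dz$, a time-averaged Caccioppoli bound does not yield the fixed slice $\tau=t^2$ (local gradient norms of caloric functions are not monotone in time, so ``unpacking the averages'' has no content here). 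Read as the Whitney-box average that is actually needed later (e.g.\ for \refcor{cor:NontangentialBoundsOnW_1andW_2}, where one must control $\nabla_{||}\mathcal{P}_\tau f(z)=\nabla_{||}e^{-\tau^2L^\tau_{||}}f(z)$ with the superscript moving with the integration variable), your frozen-superscript computation does not cover the $\tau$-dependence of $L^\tau_{||}$, and passing from $e^{-\tau^2L^{t}_{||}}$ to $e^{-\tau^2L^{\tau}_{||}}$ is exactly the nontrivial Duhamel/$W_2$ issue, not a notational one. The correct (and standard) repair is an \emph{elliptic} Caccioppoli on time slices, which is what the paper alludes to after \refprop{prop:CacciopolliTypeInequality} and uses again in the proof of \reflemma{lemma:HardyNormBounds}: with $g=f-(f)_{\Delta_{\alpha t/2}(x)}$ and a cutoff $\eta$ adapted to $\Delta_{\alpha t/2}(x)$, ellipticity and one integration by parts give
\begin{align*}
\int\eta^2|\nabla_{||}e^{-t^2L^t_{||}}f|^2\,dz\;\lesssim\;\int\eta^2\,\big|L^t_{||}e^{-t^2L^t_{||}}f\big|\,\big|e^{-t^2L^t_{||}}g\big|\,dz\;+\;\int|\nabla\eta|^2\,\big|e^{-t^2L^t_{||}}g\big|^2\,dz,
\end{align*}
and now parts (i) and (ii) (applied at nearby points of a slightly larger cone) bound the integrands pointwise by $t^{-1}M[\nabla_{||}f](x)\cdot M[\nabla_{||}f](x)$ and $t^{-2}M[\nabla_{||}f]^2(x)$ respectively, yielding the slice estimate; since this argument is uniform in the frozen parameter, running it for each $\tau\approx t$ with superscript $\tau$ and then averaging in $\tau$ gives the Whitney-box form that the later corollaries require. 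So your (i) and (ii) are fine, but (iii) as you argue it does not close; replace the parabolic Caccioppoli by this fixed-slice integration by parts.
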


A direct consequence of \refprop{prop:PROP11} is the following:
\begin{cor}\label{cor:NontangentialBoundsOnW_1andW_2}
    It holds for \(f\in L^{p}(\partial \Omega), p>1\)
    \begin{enumerate}[(i)]
        \item \[\Vert \tilde{N}(\nabla_{||}\mathcal{P}_sf)\Vert_{L^p(\partial\Omega)}\lesssim\Vert \nabla_{||} f\Vert_{L^p(\partial\Omega)},\textrm{ and}\]
        \item \[\Vert \tilde{N}(W_1\mathcal{P}_sf)\Vert_{L^p(\partial\Omega)}\lesssim\Vert \nabla_{||} f\Vert_{L^p(\partial\Omega)}.\]
    \end{enumerate}
\end{cor}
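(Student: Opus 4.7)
Both parts reduce to the pointwise control
\[\tilde N(G)(x) \;\lesssim\; M[\nabla_{||}f](x)\]
for the appropriate function $G$ on $\Omega$, whereupon the Hardy--Littlewood maximal theorem (bounded on $L^p$ for $p>1$) delivers the stated $L^p$-inequality against $\|\nabla_{||}f\|_{L^p}$.

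For (i), I would fix $x\in\partial\Omega$ and $(y,s)\in\Gamma_\alpha(x)$, and observe that every point $(z,\tau)\in B(y,s,s/2)$ lies in a slightly enlarged cone $\Gamma_{\alpha'}(x)$ for some $\alpha'>\alpha$ depending only on $\alpha$ (standard aperture inflation). Applying \refprop{prop:PROP11}(iii) at the base point $x$ with aperture $\alpha'$ yields
\[\fint_{B(y,s,s/2)}|\nabla_{||}\mathcal{P}_\tau f(z)|^2\,dz\,d\tau \;\lesssim\; M[\nabla_{||}f]^2(x).\]
Cauchy--Schwarz then controls the corresponding $L^1$-average by $M[\nabla_{||}f](x)$, and taking the supremum over $(y,s)\in\Gamma_\alpha(x)$ gives $\tilde N(\nabla_{||}\mathcal{P}_s f)(x)\lesssim M[\nabla_{||}f](x)$. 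The claim in (i) is then immediate from $\|M[\nabla_{||}f]\|_{L^p}\lesssim\|\nabla_{||}f\|_{L^p}$.

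For (ii) the argument is essentially identical, only replacing \refprop{prop:PROP11}(iii) with \refprop{prop:PROP11}(i): the latter furnishes the pointwise bound $W_1 f(z,\tau)\lesssim M[\nabla_{||}f](x)$ valid on $(z,\tau)\in\Gamma_{\alpha'}(x)$, which persists both under the $L^1$-average over $B(y,s,s/2)$ and under the supremum in $(y,s)\in\Gamma_\alpha(x)$, yielding $\tilde N(W_1\mathcal{P}_s f)(x)\lesssim M[\nabla_{||}f](x)$ and thus the claim.

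The main (and in fact only) bookkeeping is the routine aperture inflation needed to pass from the base point $(y,s)$ of the averaging ball $B(y,s,s/2)$ to an arbitrary point of that ball, so that \refprop{prop:PROP11} continues to apply at the common exterior point $x\in\partial\Omega$. Beyond this check there is no real obstacle: \refprop{prop:PROP11} is doing all of the hard analytic work, and the Corollary is essentially a packaging of its pointwise bounds via the $L^p$-theory of the Hardy--Littlewood maximal function.
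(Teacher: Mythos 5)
Your proposal is correct and matches the paper's intent: the paper states this corollary as a direct consequence of \refprop{prop:PROP11}, precisely via the pointwise domination of the Whitney averages in the cone by \(M[\nabla_{||}f](x)\) (parts (iii) and (i) of that proposition, after the routine aperture enlargement you describe) followed by the \(L^p\)-boundedness of the Hardy--Littlewood maximal function. No gaps; your write-up simply makes explicit the bookkeeping the paper leaves implicit.
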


Furthermore, we have off-diagonal estimates
\begin{prop}[Off-diagonal estimates]\label{prop:Off-diagonalEstimates}
    For \(T\in \{e^{-tL},\sqrt{t}e^{-tL},t\partial_te^{-tL}\}\) there exists \(C,\alpha>0\) such that
    \[\Vert Th\Vert_{L^2(E)}\lesssim e^{-\frac{\alpha d(E,F)^2}{t}}\Vert h\Vert_{L^2(\mathbb{R}^n)}\] 
    for all \(h\in L^2(\mathbb{R}^n)\) with \(\mathrm{supp}(h)\subset F\).
\end{prop}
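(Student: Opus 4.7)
The plan is to deduce all three off-diagonal estimates from the weighted-$L^2$ (Davies) method applied to the semigroup itself, and then bootstrap via analyticity and a parabolic Caccioppoli estimate to the time-derivative and gradient cases. Throughout, fix Borel sets $E,F\subset\mathbb{R}^n$ with $d:=\Dist(E,F)>0$ and $h\in L^2(\mathbb{R}^n)$ supported in $F$.

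\emph{Step 1 (the semigroup $T=e^{-tL}$).} Pick a Lipschitz function $\phi:\mathbb{R}^n\to[0,\infty)$ with $\phi\equiv 0$ on $F$, $\phi\equiv d$ on $E$, and $\|\nabla\phi\|_\infty\leq 1$ (for instance $\phi=\min(d,\Dist(\cdot,F))$). For a parameter $\rho>0$ to be optimized, set $u(\cdot,s):=e^{-sL}h$ and
\[J(s):=\int_{\mathbb{R}^n}e^{2\rho\phi}|u(\cdot,s)|^2\,dx.\]
Using $\partial_s u=-Lu$ and integrating by parts against $e^{2\rho\phi}\bar u$,
\[J'(s)=-2\,\mathrm{Re}\int e^{2\rho\phi}A\nabla u\cdot\nabla\bar u\,dx-4\rho\,\mathrm{Re}\int e^{2\rho\phi}\bar u\,A\nabla u\cdot\nabla\phi\,dx.\]
Ellipticity bounds the first integral from above by $-2\lambda\int e^{2\rho\phi}|\nabla u|^2$ and Young's inequality absorbs the cross term into it, yielding $J'(s)\leq C(\lambda)\rho^2 J(s)$. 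Since $J(0)=\|h\|_{L^2}^2$ (as $\phi\equiv 0$ on $F$), Gronwall gives $J(s)\leq e^{C\rho^2 s}\|h\|_{L^2}^2$; combining with $J(s)\geq e^{2\rho d}\|u\|_{L^2(E)}^2$ yields
\[\|e^{-sL}h\|_{L^2(E)}\leq e^{\frac{C\rho^2 s}{2}-\rho d}\|h\|_{L^2}.\]
Optimizing $\rho=d/(Cs)$ produces the Gaussian decay with $\alpha=1/(2C)$.

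\emph{Step 2 (derivative and gradient).} For $T=t\partial_t e^{-tL}=-tLe^{-tL}$, I would factor $tLe^{-tL}=(tL\,e^{-tL/2})\circ e^{-tL/2}$, use the uniform $L^2$-boundedness of $sLe^{-sL}$ (which follows from analyticity of the semigroup associated to the accretive form of $L$), and decompose $h$ along annular shells in $F$ around the bulk of $E$. Applying Step 1 to each shell and summing Gaussian tails — via the elementary inequality $e^{-\alpha(a^2+b^2)/t}\leq e^{-\alpha(a+b)^2/(2t)}$ — transfers the decay with a (harmlessly) smaller constant $\alpha$. For the middle operator, which I read as $\sqrt{t}\nabla e^{-tL}$ (the literal $\sqrt{t}e^{-tL}$ version fails for large $t$ against $\|e^{-tL}\|_{L^2\to L^2}\leq 1$), the standard parabolic Caccioppoli inequality for $\partial_s u+Lu=0$ on a $\sqrt{t}$-enlargement $\tilde E$ of $E$ gives
\[\int_E|\nabla u(\cdot,t)|^2\,dx\lesssim \frac{1}{t}\int_{\tilde E}|u(\cdot,t)|^2\,dx+t\int_{\tilde E}|\partial_s u(\cdot,t)|^2\,dx,\]
and applying Step 1 on $\tilde E$ together with the time-derivative bound just established finishes it.

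\emph{Main obstacle.} The weighted energy computation of Step 1 is entirely routine for a real uniformly elliptic operator with bounded measurable coefficients. The only delicate bookkeeping is in Step 2: one must verify that composing two Gaussian off-diagonal bounds yields another Gaussian bound, possibly with a smaller exponent, and that the $\sqrt{t}$-enlargement $\tilde E$ in the Caccioppoli step remains separated from $F$ by a distance comparable to $d$. Both are standard once one accepts that only the \emph{existence} of $\alpha>0$ is claimed.
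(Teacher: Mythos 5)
The paper itself offers no proof of this proposition: it is quoted as the standard Gaffney--Davies off-diagonal estimates for semigroups generated by uniformly elliptic divergence-form operators, so your argument has to stand on its own. Your Step 1 is exactly the classical Davies weighted-energy argument and is correct (modulo the routine justification of differentiating $J$ and integrating by parts), including the optimization $\rho=d/(Cs)$. Your reading of the second family as $\sqrt{t}\,\nabla e^{-tL}$ — equivalently, by duality, $\sqrt{t}\,e^{-tL}\mathrm{div}$, which is the form in which the paper actually uses it in the Carleson-function bounds for $W_2$ — is also the right one, since the literal $\sqrt{t}\,e^{-tL}$ cannot satisfy the stated uniform bound.

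The genuine gap is in Step 2 for $T=t\partial_te^{-tL}=-tLe^{-tL}$. Factoring $tLe^{-tL}=(tLe^{-tL/2})\circ e^{-tL/2}$ and invoking only the uniform $L^2$-boundedness of $tLe^{-tL/2}$ cannot produce off-diagonal decay, no matter how you decompose $h$ over shells: Step 1 tells you that $e^{-tL/2}h_j$ is exponentially small \emph{near} $E$, but it still has full size $\|h_j\|_{L^2}$ globally, and a merely bounded outer operator can transport that global mass back onto $E$ with no gain. To exploit the splitting $e^{-tL/2}h=\chi_{\{\mathrm{dist}(\cdot,E)\le d/2\}}(\cdots)+\chi_{\{\mathrm{dist}(\cdot,E)>d/2\}}(\cdots)$ you would need an off-diagonal bound for $tLe^{-tL/2}$ acting on the far piece — which is precisely the estimate being proved, so the argument is circular; the inequality $e^{-\alpha(a^2+b^2)/t}\le e^{-\alpha(a+b)^2/(2t)}$ only helps when \emph{both} factors already carry Gaussian decay. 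The standard repair is to run Davies' trick at the level of the generator rather than of the solution: set $L_\rho:=e^{\rho\phi}Le^{-\rho\phi}$; its sesquilinear form satisfies $\mathrm{Re}\,a_\rho(u,u)\ge\tfrac{\lambda}{2}\|\nabla u\|_{L^2}^2-C\rho^2\|u\|_{L^2}^2$ uniformly, so $e^{-tL_\rho}$ is an analytic semigroup with $\|e^{-tL_\rho}\|+\|tL_\rho e^{-tL_\rho}\|+\|\sqrt{t}\,\nabla e^{-tL_\rho}\|\lesssim e^{Ct\rho^2}$, and sandwiching $\chi_E e^{-\rho\phi}(\,\cdot\,)e^{\rho\phi}\chi_F$ plus the same optimization in $\rho$ yields all three families at once (alternatively: prove Step 1 for complex times in a sector and recover $t\partial_te^{-tL}$ from Cauchy's integral formula). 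Once the time-derivative bound is available your fixed-time Caccioppoli reduction for the gradient is fine — and the $\sqrt{t}$-enlargement issue you flag is harmless because for $\sqrt{t}\gtrsim d$ the claim already follows from the trivial energy bound — but note the twisted-generator route delivers the gradient family directly, making that detour unnecessary.
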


\subsection{\(L^2\) Square Function Bounds on terms involving \(\mathcal{P}_t\)}

First, we note that since \(W_1f\) and \(W_2f\) satisfy some PDE, we have a Cacciopolli inequality and Cacciopolli-type inequality respectively.
\begin{prop}[Lemma 4.1 and Lemma 4.2 in \cite{ulmer_mixed_2024}]\label{prop:CacciopolliTypeInequality}
    Let \(B(x,t,4r)\subset \Omega\), then 
    \begin{align}
        \int_{B(x,t,r)} |\nabla_{||}W_1f(y,s)|^2 dyds\lesssim \frac{1}{r^2}\int_{B(x,t,2r)} |W_1f(y,s)|^2 dyds,
    \end{align}
    and
    \begin{align}
        \fint_{B(x,t,r)} |\nabla_{||}W_2f(y,s)|^2 dyds&\lesssim  \frac{1}{r^2}\fint_{B(x,t,2r)} |W_2f(y,s)|^2 dyds\nonumber
        \\
        &\qquad + \sup_{s\in (t-2r,t+2r)}\Vert\partial_s\AP(y,s)\Vert_{L^\infty(\mathbb{R}^n)}^2M[\nabla_{||}f]^2(x).\label{eq:secondsummand}
    \end{align}
\end{prop}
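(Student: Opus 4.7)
The plan is to prove both inequalities by a Caccioppoli-type energy argument applied to the weak PDEs that $W_1 f$ and $W_2 f$ satisfy jointly in the $(y,s)$ variables on $\Omega$. The hypothesis $B(x,t,4r)\subset\Omega$ guarantees $s\sim t>4r$ throughout the ball, so factors of $s^{-1}$ are comparable to $r^{-1}$ whenever convenient. The workhorse cutoff is a standard $\phi\in C_c^\infty(B(x,t,2r))$ with $\phi\equiv 1$ on $B(x,t,r)$ and $|\nabla_{||}\phi|+|\partial_s \phi|\lesssim 1/r$.

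First I would identify the PDEs. Differentiating the semigroup representations, using $\partial_s L_{||}^s=\mathrm{div}_{||}(\partial_s \AP\,\nabla_{||}\cdot)$ together with $\partial_s \mathcal{P}_s f = W_1 f + W_2 f$, a direct computation yields that $W_1 f$ satisfies a parabolic-type equation with source terms schematically of the form $s^{-1} W_1 f$, $2sL_{||}^s W_2 f$, and $2s\,\mathrm{div}_{||}(\partial_s \AP\,\nabla_{||}\mathcal{P}_s f)$; the Duhamel formula for $W_2 f$ gives an analogous equation whose inhomogeneity is $2s\,\mathrm{div}_{||}(\partial_s \AP\,\nabla_{||}\mathcal{P}_s f)$. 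Testing the respective PDE against $\phi^2 W_i f$ and integrating by parts, ellipticity of $\AP$ produces $\gtrsim\int\phi^2|\nabla_{||}W_i f|^2$ on the left; the cross term $\int\phi|\nabla_{||}\phi||W_i f||\nabla_{||}W_i f|$ is absorbed by Young; the $s$-derivative term reduces, after integration by parts in $s$, to $-\int (W_i f)^2 \phi\,\partial_s\phi\lesssim r^{-2}\int_{B(x,t,2r)}|W_i f|^2$; and the lower-order $s^{-1} W_1 f$ contribution is controlled the same way.

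All that remains are the terms carrying $\partial_s\AP$. Moving the divergence onto $\phi^2 W_i f$ and applying Cauchy--Schwarz and Young leaves an absorbable $\nabla_{||}W_i f$ piece plus a bulk term
\[\int_{B(x,t,2r)} s^2|\partial_s \AP(\cdot,s)|^2\,|\nabla_{||}\mathcal{P}_s f|^2\,dy\,ds.\]
For the second inequality, I would factor out $\sup_{s\in(t-2r,t+2r)}\|\partial_s \AP(\cdot,s)\|_{L^\infty}^2$ and apply \eqref{eq:NablaBounded} of \refprop{prop:PROP11} to bound $|\nabla_{||}\mathcal{P}_s f|^2\lesssim M[\nabla_{||}f](x)^2$ pointwise on the ball; the factor $s^2\sim r^2$ together with the averaging $\fint_{B(x,t,r)}$ yields precisely the claimed extra term. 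For the first inequality, the $\partial_s\AP$-source comes paired with the $2sL_{||}^s W_2 f$ coupling term in the PDE, and a careful accounting of the energy identity should allow both contributions to be absorbed into $r^{-2}\int_{B(x,t,2r)}|W_1 f|^2$, producing the clean estimate.

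The main obstacle is precisely this last step: verifying that the $\partial_s\AP$-source and the $W_2 f$-coupling in the PDE for $W_1 f$ can be simultaneously absorbed, so that no $M[\nabla_{||}f]$-type remainder survives on the right-hand side of the first inequality. If the direct argument does not achieve this cleanly, a robust fallback is to freeze $L_{||}^s$ to $L_{||}^t$ on $B(x,t,2r)$ (justified by $s\sim t$), apply the standard parabolic Caccioppoli to the frozen $t$-independent operator for the resulting ``frozen'' $W_1 f$, and then control the discrepancy using the pointwise bound on $\partial_s\AP$ together with \refprop{prop:PROP11}.
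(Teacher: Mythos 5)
Your proposal does not line up with how the paper treats this statement, and more importantly it contains a genuine gap. First, for context: the paper does not reprove the proposition. It quotes Lemmas 4.1 and 4.2 of \cite{ulmer_mixed_2024} and only indicates the modification needed for \eqref{eq:secondsummand}: the second summand comes from the trivial \(L^\infty\) bound on \(\partial_t\AP\) combined with \eqref{eq:NablaBounded}, while the term \(\frac{1}{r^2}\fint_{B(x,t,2r)}|W_2f|^2\) comes from smoothness of the semigroup and a Caccioppoli inequality on time slices, i.e.\ one works for each fixed \(s\) with the \(s\)-frozen operator \(L^s_{||}\). The gap in your argument is exactly the point you flag yourself: in the joint \((y,s)\) energy identity for \(W_1f\), the sources \(2s\,\mathrm{div}_{||}(\partial_s\AP\,\nabla_{||}\mathcal{P}_sf)\) and \(2sL^s_{||}W_2f\) cannot be absorbed into \(r^{-2}\int_{B(x,t,2r)}|W_1f|^2\). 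After integration by parts and Young, the first source leaves the bulk term \(\int s^2|\partial_s\AP|^2|\nabla_{||}\mathcal{P}_sf|^2\), which is precisely an \(M[\nabla_{||}f]\)-type remainder that the clean first inequality must not have, and the coupling term requires \(\nabla_{||}W_2f\) on the larger ball, i.e.\ the second inequality, which again carries the \(M[\nabla_{||}f]\) term; no cancellation is available. The reason the first inequality is clean is structural and your setup obscures it: \(\nabla_{||}\) never differentiates the frozen transversal parameter, so one can argue slice-wise with the \(s\)-frozen semigroup (parabolic Caccioppoli on time slices plus analyticity of \(e^{-\tau L^s_{||}}\)), and no \(\partial_s\AP\) term ever arises. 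Your fallback (freezing \(L^s_{||}\) to \(L^t_{||}\) and controlling the discrepancy via \(|\partial_sA|\lesssim 1/s\) and \refprop{prop:PROP11}) reintroduces a Duhamel-type, i.e.\ \(W_2\)-type, error and hence again an \(M[\nabla_{||}f]\) remainder, not the stated clean right-hand side.

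There is a second problem with the ``joint PDE'' you propose to test, already for the second inequality. Writing \(W_2f(y,s)=w(y,s^2,\sigma)|_{\sigma=s}\), where \(\partial_\theta w+L^\sigma_{||}w=\mathrm{div}_{||}(\partial_\sigma\AP(\cdot,\sigma)\nabla_{||}e^{-\theta L^\sigma_{||}}f)\), the genuine \(s\)-derivative of \(W_2f\) along the diagonal contains \((\partial_\sigma w)(y,s^2,s)\), which by Duhamel involves \(\partial_s^2\AP\), a quantity not controlled by any hypothesis of the paper. So either the equation you test for \(W_2f\) (and likewise the \(W_2\)-coupling in your equation for \(W_1f\)) is not the one satisfied in the actual \((y,s)\) variables, or your ``integration by parts in \(s\)'' silently identifies the auxiliary semigroup time with the transversal variable; that identification is exactly the delicate point the time-slice argument of \cite{ulmer_mixed_2024} is designed to handle. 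Two smaller inaccuracies: \(s\sim t\geq 4r\) does not give \(s^2\sim r^2\) (only \(s^2\gtrsim r^2\)), and \eqref{eq:NablaBounded} is an averaged bound at Whitney scale comparable to \(t\), not a pointwise bound on the possibly much smaller ball \(B(x,t,2r)\), so the step producing the second summand also needs repair when \(r\ll t\). As it stands, neither inequality of the proposition is established by the proposal.
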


The statement of Lemma 4.2 in \cite{ulmer_mixed_2024} is slightly different to above proposition. However, we can easily see that the second summand in \eqref{eq:secondsummand} results from the trivial \(L^\infty\) bound on \(\partial_t\AP\) and \eqref{eq:NablaBounded}, while we use smoothness of the semi group and the resulting Cacciopolli inequality on time slices for the first term of \eqref{eq:secondsummand}.
\medskip

Additionally, we already have \(L^2\) square function bounds.
\begin{prop}[Lemma 7.7 in \cite{ulmer_mixed_2024}]\label{prop:L^2SquareFctBoundW_1}
For \(f\in W^{1,2}(\mathbb{R}^n)\) it holds that
\begin{align}
    \Vert \mathcal{A}(W_1f)\Vert_{L^2(\mathbb{R}^n)}\lesssim \Vert \nabla f\Vert_{L^2(\mathbb{R}^n)}
\end{align}
and
\begin{align}
    \Vert \mathcal{A}(t\nabla W_1f)\Vert_{L^2(\mathbb{R}^n)}\lesssim \Vert \nabla f\Vert_{L^2(\mathbb{R}^n)}.
\end{align}
\end{prop}
\begin{prop}[Lemma 7.2 and Lemma 7.3 in \cite{ulmer_mixed_2024}]\label{prop:L^2SquareFctBoundW_2}
For \(f\in W^{1,2}(\mathbb{R}^n)\) it holds that
\begin{align}
    \Vert \mathcal{A}(W_2f)\Vert_{L^2(\mathbb{R}^n)}\lesssim \Vert \nabla f\Vert_{L^2(\mathbb{R}^n)}
\end{align}
and
\begin{align}
    \Vert \mathcal{A}(t\nabla W_2f)\Vert_{L^2(\mathbb{R}^n)}\lesssim \Vert \nabla f\Vert_{L^2(\mathbb{R}^n)}.
\end{align}
\end{prop}

\begin{lemma}\label{lemma:L^2SquareFctBoundP-MVP}
    For \(f\in W^{1,2}(\mathbb{R}^n)\) it holds that
    \[\Vert \mathcal{A}(t^{-1}(\mathcal{P}_tf-(\mathcal{P}_tf)_{\Delta(\cdot,t/2)}))\Vert_{L^2}\lesssim \Vert\nabla f\Vert_{L^2(\mathbb{R}^n)}.\]
\end{lemma}

\begin{proof}
    For each cone \(\Gamma_\alpha(x)\) we have by Poincar\'{e}'s inequality
    \begin{align*}
        \int_{\Gamma_\alpha(x)}\frac{|\mathcal{P}_tf-(\mathcal{P}_tf)_{\Delta(x,t/2)}|^2}{t^{n+3}}dydt&=\int_0^\infty \frac{1}{t^{n+3}}\Big(\int_{\Delta(x,t/2)}|\mathcal{P}_tf-(\mathcal{P}_tf)_{\Delta(x,t/2)}|^2dy\Big)dt
        \\
        &\lesssim\int_0^\infty \frac{1}{t^{n+1}}\Big(\int_{\Delta(x,t/2)}|\nabla_{||}\mathcal{P}_tf|^2dy\Big)dt
        \\
        &\lesssim\int_{\Gamma_\alpha(x)}\frac{|\nabla_{||}\mathcal{P}_tf|^2}{t^{n+3}}dydt.
    \end{align*}

    Hence
    \begin{align*}
        &\Vert \mathcal{A}(t^{-1}(\mathcal{P}_tf-(\mathcal{P}_tf)_{\Delta(\cdot,t/2)}))\Vert_{L^2}&
        \\
        &\lesssim \int_\Omega \frac{|\nabla_{||}\mathcal{P}_tf(x)|^2}{t}dxdt\lesssim \int_\Omega \frac{\AP\nabla_{||}\mathcal{P}_tf(x)\cdot\nabla_{||}\mathcal{P}_tf(x)}{t}dxdt
        \\
        &\lesssim \int_\Omega \frac{W_1f(x)\mathcal{P}_t(f-(\mathcal{P}_tf)_{\Delta(x,t/4)})(x)}{t^2}dxdt
        \\
        &\lesssim \Vert \mathcal{A}(W_1f)\Vert_{L^2} + \sigma \int_{\partial\Omega}\Big(\int_{\Gamma_\alpha(x)}\frac{|\mathcal{P}_tf-(\mathcal{P}_tf)_{\Delta(y,t/4)}|^2}{t^{n+3}}dydt\Big) dx
        \\
        &\lesssim \Vert \mathcal{A}(W_1f)\Vert_{L^2} + \sigma\Vert \mathcal{A}(t^{-1}(\mathcal{P}_tf-(\mathcal{P}_tf)_{\Delta(\cdot,t/2)}))\Vert_{L^2}.
    \end{align*}
    By choosing a small constant \(\sigma\) we can hide the last term on the left hand side and finish the proof by invoking \refprop{prop:L^2SquareFctBoundW_1}.
\end{proof}

\section{Bounds for Carleson functions}

Recall the defintion \eqref{recall:carelsonfunctiondef} of the Carleson function for \(x\in\partial\Omega\) by
\begin{align*}
    C(f)(x):=\sup_{\Delta=\Delta(x,r)\subset\partial\Omega} \Big(\frac{1}{\sigma(\Delta)}\int_{T(\Delta)} \frac{|f(x,t)|^2}{t}dxdt\Big)^{\frac{1}{2}}.
\end{align*}
We are going to establish that the Carleson function of \(\partial_t\mathcal{P}_tf\) and \(t\nabla_{||}\partial_t\mathcal{P}_tf\) are in \(L^\infty\) if \(f\in W^{1,\infty}(\mathbb{R}^n)\).

\begin{lemma}\label{lemma:CarlesonFunctionBoundW_1}
    Let \(f\in W^{1,\infty}(\mathbb{R}^n)\), then
    \begin{enumerate}[(i)]
        \item \(\sup_{\Delta\subset\partial\Omega}\frac{1}{\sigma(\Delta)}\int_{T(\Delta)}\frac{|W_1f(x,t)|^2}{t}dxdt\lesssim \Vert\nabla_{||}f\Vert_{L^\infty(\mathbb{R}^n)}\), and\label{item:LocalSquareBoundW1}
        \item \(\sup_{\Delta\subset\partial\Omega}\frac{1}{\sigma(\Delta)}\int_{T(\Delta)}\frac{|t\nabla_{||} W_1f(x,t)|^2}{t}dxdt\lesssim \Vert\nabla_{||}f\Vert_{L^\infty(\mathbb{R}^n)}\).\label{item:LocalSquareBoundNablaW1}
    \end{enumerate}
\end{lemma}

\begin{proof}
    First we prove \eqref{item:LocalSquareBoundW1}. We introduce a smooth cut-off function \(\eta\in C_0^\infty(3\Delta)\) with \(\eta\equiv 1\) on \(2\Delta\) and \(|\nabla \eta|\lesssim \frac{1}{l(\Delta)}\) and split the data \(f\) into a local and far-away part. We obtain

    \begin{align*}
        &\int_{T(\Delta)}\frac{|W_1f(x,t)|^2}{t}dxdt=\int_{T(\Delta)}\frac{|W_1(f-(f)_{3\Delta})(x,t)|^2}{t}dxdt
        \\
        &\qquad \leq \int_{T(\Delta)}\frac{|W_1((f-(f)_{3\Delta})\eta)(x,t)|^2}{t}dxdt
        \\
        &\qquad \qquad+ \int_{T(\Delta)}\frac{|W_1((f-(f)_{3\Delta})(1-\eta))(x,t)|^2}{t}dxdt.
    \end{align*}
    Since the first integral is bounded by 
    \[\Vert\nabla( (f-(f)_{3\Delta})\eta)\Vert_{L^2(\mathbb{R}^n)}\lesssim \frac{1}{l(\Delta)}\Vert (f-(f)_{3\Delta}) \Vert_{L^2(3\Delta)} + \Vert \nabla_{||} f\Vert_{L^2(3\Delta)}\lesssim \Vert \nabla_{||} f\Vert_{L^2(3\Delta)}\]
    by \refprop{prop:L^2SquareFctBoundW_1} and Poincar\'{e}'s inequality, we only need to deal with the second integral.

    \smallskip
    We have for a fixed \(t\in (0,l(\Delta))\) with off-diagonal estimates \refprop{prop:Off-diagonalEstimates} for \(W_1g(x,t)=t\partial_\tau e^{-\tau L^s}g(x)|_{s=t, \tau=t^2}\)
    \begin{align*}
        &\Vert W_1 ((f-(f)_{3\Delta})(1-\eta))\Vert_{L^2(\Delta)}
        \\
        &\lesssim \Vert W_1 (\chi_{4\Delta}(f-(f)_{3\Delta})(1-\eta))\Vert_{L^2(\Delta)}+\sum_{k\geq 2}\Vert W_1 (\chi_{2^{k+1}\Delta\setminus 2^{k}\Delta}(f-(f)_{3\Delta}))\Vert_{L^2(\Delta)}
        \\
        &\lesssim \frac{1}{t}e^{-c\frac{l(\Delta)^2}{t^2}}\Vert f-(f)_{3\Delta} \Vert_{L^2(4\Delta\setminus 2\Delta)} +  \sum_{k\geq 2}\frac{1}{t}e^{-c\frac{2^{2k}l(\Delta)^2}{t^2}}\Vert f-(f)_{3\Delta}\Vert_{L^2(2^{k+1}\Delta\setminus 2^k\Delta)}
        \\
        &\lesssim \frac{1}{t}e^{-c\frac{l(\Delta)^2}{t^2}}\Vert f-(f)_{4\Delta} \Vert_{L^2(4\Delta\setminus 2\Delta)} + \frac{1}{t}e^{-c\frac{l(\Delta)^2}{t^2}}\Vert (f)_{4\Delta}-(f)_{3\Delta} \Vert_{L^2(4\Delta\setminus 2\Delta)}
        \\
        &\qquad +\sum_{k\geq 2}\frac{1}{t}e^{-c\frac{2^{2k}l(\Delta)^2}{t^2}}\Big(\Vert f-(f)_{2^{k+1}\Delta}\Vert_{L^2(2^{k+1}\Delta\setminus 2^k\Delta)} 
        \\
        &\qquad + \sum_{j=2}^{k-1} \Vert (f)_{2^{j+1}\Delta}-(f)_{2^j\Delta}\Vert_{L^2(2^{k+1}\Delta\setminus 2^k\Delta)} + \Vert (f)_{4\Delta}-(f)_{3\Delta}\Vert_{L^2(2^{k+1}\Delta\setminus 2^k\Delta)}\Big).
    \end{align*}

    For each term of the form \(\Vert (f)_{2^{j+1}\Delta}-(f)_{2^j\Delta}\Vert_{L^2(2^{k+1}\Delta\setminus 2^k\Delta)}\) or \(\Vert f-(f)_{4\Delta}\Vert_{L^2(4\Delta)}\) we apply Poincar\'{e}'s inequality and obtain
    \begin{align*}
        &\frac{l(\Delta)}{t}e^{-c\frac{l(\Delta)^2}{t^2}}\Vert \nabla_{||}f \Vert_{L^2(4\Delta\setminus 2\Delta)} + \frac{l(\Delta)}{t}e^{-c\frac{l(\Delta)^2}{t^2}}\Vert\nabla_{||}f \Vert_{L^2(4\Delta\setminus 2\Delta)}
        \\
        &\qquad +\sum_{k\geq 2}\frac{1}{t}e^{-c\frac{2^{2k}l(\Delta)^2}{t^2}}\Big(2^{k+1}l(\Delta)\Vert \nabla_{||}f\Vert_{L^2(2^{k+1}\Delta)}
        \\
        &\qquad + \sum_{j=2}^{k-1} 2^{j+1}l(\Delta)\big\Vert \Vert \nabla_{||} f\Vert_{L^2(2^{j+1}\Delta)}\big\Vert_{L^2(2^{k+1}\Delta\setminus 2^k\Delta)}
        \\
        &\qquad + l(\Delta)\big\Vert \Vert \nabla_{||} f\Vert_{L^2(4\Delta)}\big\Vert_{L^2(2^{k+1}\Delta\setminus 2^k\Delta)}\Big)
        \\
        &\lesssim \frac{l(\Delta)^{\frac{n}{2}+1}}{t}e^{-c\frac{l(\Delta)^2}{t^2}}\Vert \nabla_{||}f \Vert_{\infty} + \frac{l(\Delta)^{\frac{n}{2}+1}}{t}e^{-c\frac{l(\Delta)^2}{t^2}}\Vert\nabla_{||}f \Vert_{\infty}
        \\
        &\qquad +\sum_{k\geq 2}\frac{1}{t}e^{-c\frac{2^{2k}l(\Delta)^2}{t^2}}\Big(2^{(k+1)(\frac{n}{2}+1)}l(\Delta)^{\frac{n}{2}+2}\Vert \nabla_{||}f\Vert_{\infty}
        \\
        &\qquad + \sum_{j=2}^{k-1} 2^{(k+1)(\frac{n}{2}) + j+1}l(\Delta)^{(j+1)(\frac{n}{2}+1)}\Vert \nabla_{||} f\Vert_{\infty} + l(\Delta)^{\frac{n}{2}+2} \Vert \nabla_{||} f\Vert_{\infty}\Big)
        \\
        &\lesssim \frac{l(\Delta)^{\frac{n}{2}+1}}{t}\Big(e^{-c\frac{l(\Delta)^2}{t^2}} + \sum_{k\geq 2} (k+1)2^{(k+1)(\frac{n}{2}+1)}e^{-c\frac{2^{2k}l(\Delta)^2}{t^2}}\Big)\Vert \nabla_{||} f\Vert_{\infty}
        \\
        &\lesssim \frac{l(\Delta)^{\frac{n}{2}+1}}{t}\Big(\sum_{k\geq 1} (k+1)2^{(k+1)(\frac{n}{2}+1)}e^{-c\frac{2^{2k}l(\Delta)^2}{t^2}}\Big)\Vert \nabla_{||} f\Vert_{\infty}
        \\
        &\lesssim \frac{l(\Delta)^{\frac{n}{2}+1}}{t}\Big(\sum_{k\geq 1} 2^{(k+1)(\frac{n}{2}+2)}e^{-c\frac{2^{2k}l(\Delta)^2}{t^2}}\Big)\Vert \nabla_{||} f\Vert_{\infty}.
    \end{align*}

    With this in hand, we return now to 
    \begin{align*}
    &\int_{T(\Delta)}\frac{|W_1((f-(f)_{3\Delta})(1-\eta))(x,t)|^2}{t}dxdt
    \\
    &\qquad\lesssim \int_0^{l(\Delta)}\frac{l(\Delta)^{n+2}}{t^3}\Big(\sum_{k\geq 1} 2^{(k+1)(\frac{n}{2}+2)}e^{-c\frac{2^{2k}l(\Delta)^2}{t^2}}\Big)^2\Vert \nabla_{||} f\Vert_{\infty}^2dt
    \\
    &\qquad \lesssim l(\Delta)^{n+2}\Vert \nabla_{||} f\Vert_{\infty}^2\int_0^{l(\Delta)}\Big(\sum_{k\geq 1} \frac{1}{t^{3/2}}2^{(k+1)(\frac{n}{2}+2)}e^{-c\frac{2^{2k}l(\Delta)^2}{t^2}}\Big)^2dt.
    \end{align*}

    We can study the function \(t\mapsto \frac{1}{t^{3/2}}e^{-c\frac{a^2}{t^2}}\) for \(t\in (0,l(\Delta))\) and observe that it attains its maximum if \(t=l(\Delta)\), whence
    \[\sum_{k\geq 1} \frac{1}{t^{3/2}}2^{(k+1)(\frac{n}{2}+2)}e^{-c\frac{2^{2k}l(\Delta)^2}{t^2}}\lesssim \frac{1}{l(\Delta)^{3/2}}\sum_{k\geq 1} 2^{(k+1)(\frac{n}{2}+2)}e^{-c2^{2k}}.\]
    We consider the sum as Riemann sums of the corresponding integral
    \[\int_0^\infty x^{\frac{n}{2}+1}e^{-cx^2}dx,\]
    which converges. Hence we obtain in total
    
    \begin{align*}
    &\int_{T(\Delta)}\frac{|W_1((f-(f)_{3\Delta})(1-\eta))(x,t)|^2}{t}dxdt
    \\
    &\qquad \lesssim l(\Delta)^{n+2}\Vert \nabla_{||} f\Vert_{\infty}^2\int_0^{l(\Delta)}\frac{1}{l(\Delta)^3}dt=\sigma(\Delta) \Vert \nabla_{||} f\Vert_{\infty}^2.
    \end{align*}
\end{proof}

We can replicate the same proof for a different operator with the same kernel bounds.
\begin{cor}\label{cor:CarlesonFunctionBoundP-MVP}
    Let \(f\in W^{1,\infty}(\mathbb{R}^n)\). Then
    \[\sup_{\Delta\subset\partial\Omega}\frac{1}{\sigma(\Delta)}\int_{T(\Delta)}\frac{|\mathcal{P}_tf(x)-(\mathcal{P}_tf)_{\Delta(x,t/2)}|^2}{t^3}dxdt\lesssim \Vert\nabla_{||}f\Vert_{L^\infty}.\]
\end{cor}

\begin{proof}
    We observe that we only needed the \(L^2\)-square function bound, kernel estimates and off-diagonal estimates for the operator \(te^{-t^2L_{||}^t}\) for a fixed \(t>0\) in the previous proof. However, the operator \(\frac{1}{t}\mathcal{P}_t(f(x)-(\mathcal{P}_tf)_{\Delta(x,t/2)})\) satisfies the same kernel and off-diagonal estimates (see \refprop{Prop:KernelBounds}, \refprop{prop:Off-diagonalEstimates}) and has the same \(L^2\)-square function bound (see \reflemma{lemma:L^2SquareFctBoundP-MVP}). Hence the proof works analogously to the proof of \eqref{item:LocalSquareBoundW1} in \reflemma{lemma:CarlesonFunctionBoundW_1}.
\end{proof}

Next, we obtain the same bounds for \(W_2f\).
\begin{lemma}\label{lemma:CarlesonFunctionBoundW_2}
    Let \(f\in W^{1,\infty}(\mathbb{R}^n)\). Then
    \begin{enumerate}
        \item \(\sup_{\Delta\subset\partial\Omega}\frac{1}{\sigma(\Delta)}\int_{T(\Delta)}\frac{|W_2f(x,t)|^2}{t}dxdt\lesssim \Vert\nabla_{||}f\Vert_{L^\infty}\), and\label{item:LocalSquareBoundW2}
        \item \(\sup_{\Delta\subset\partial\Omega}\frac{1}{\sigma(\Delta)}\int_{T(\Delta)}\frac{|t\nabla_{||} W_2f(x,t)|^2}{t}dxdt\lesssim \Vert\nabla_{||}f\Vert_{L^\infty}\).\label{item:LocalSquareBoundNablaW2}
    \end{enumerate}
\end{lemma}

\begin{proof}
    For \eqref{item:LocalSquareBoundW2} we use Minkowski inequality to obtain

    \begin{align*}
        &\int_{T(\Delta)}\frac{|W_sf|^2}{s}dxds= \int_0^{\infty}\int_{\Delta}\frac{1}{s}\Big|\int_0^{s} \tau e^{-(s^2-\tau^2)L_{||}^s}\mathrm{div}(\partial_s \AP(x,s)\nabla e^{-\tau^2L_{||}^s}f)d\tau\Big|^2 dx ds
        \\
        &\leq \int_0^{l(\Delta)}\frac{1}{s}\Big(\int_0^{s} \Vert\tau e^{-(s^2-\tau^2)L_{||}^s}\mathrm{div}(\partial_s \AP(x,s)\nabla e^{-\tau^2L_{||}^s}f)\Vert_{L^2(\Delta)} d\tau\Big)^2 dx ds.
    \end{align*}
    Let us use off-diagonal estimates \refprop{prop:Off-diagonalEstimates} and cut-off functions \(\eta_k:=\chi_{2^k\Delta\setminus2^{k-1}\Delta}\) such that \(\sum_k\chi_{2^k\Delta\setminus 2^{k-1}\Delta} + \chi_{2\Delta}=1\) to obtain
    \begin{align}
        &\Vert\tau e^{-(s^2-\tau^2)L_{||}^s}\mathrm{div}(\partial_s \AP(x,s)\nabla e^{-\tau^2L_{||}^s}f)\Vert_{L^2(\Delta)}\nonumber
        \\
        &\leq \sum_{k\geq 2}\Vert\tau e^{-(s^2-\tau^2)L_{||}^s}\mathrm{div}(\eta_k\partial_s \AP(x,s)\nabla e^{-\tau^2L_{||}^s}f)\Vert_{L^2(\Delta)}\nonumber
        \\
        &\qquad + \Vert\tau e^{-(s^2-\tau^2)L_{||}^s}\mathrm{div}(\chi_{2\Delta}\partial_s \AP(x,s)\nabla e^{-\tau^2L_{||}^s}f)\Vert_{L^2(\Delta)}\nonumber
        \\
        &\leq \sum_{k\geq 2}\frac{\tau}{\sqrt{s^2-\tau^2}}e^{-c\frac{2^{2k}l(\Delta)^2}{s^2-\tau^2}}\Vert\partial_s \AP(x,s)\nabla e^{-\tau^2L_{||}^s}f\Vert_{L^2(2^k\Delta\setminus 2^{k-1}\Delta)}\nonumber
        \\
        &\qquad + \frac{\tau}{\sqrt{s^2-\tau^2}}\Vert\partial_s \AP(x,s)\nabla e^{-\tau^2L_{||}^s}f\Vert_{L^2(2\Delta)}=I+II.\nonumber
    \end{align}
    We continue for the first term with the same ideas, and get
    \begin{align*}
        I&\leq \sum_{k\geq 2}\sum_{m\geq 2}\frac{\tau}{\sqrt{s^2-\tau^2}}e^{-c\frac{2^{2k}l(\Delta)^2}{s^2-\tau^2}}\Vert\partial_s \AP(x,s)\nabla e^{-\tau^2L_{||}^s}(\eta_{k+m}(f-(f)_{2^k\Delta}))\Vert_{L^2(2^k\Delta\setminus 2^{k-1}\Delta)}\nonumber
        \\
        & \qquad + \sum_{k\geq 2}\frac{\tau}{\sqrt{s^2-\tau^2}}e^{-c\frac{2^{2k}l(\Delta)^2}{s^2-\tau^2}}\Vert\partial_s \AP(x,s)\nabla e^{-\tau^2L_{||}^s}(\chi_{2^{k+1}\Delta}(f-(f)_{2^k\Delta}))\Vert_{L^2(2^k\Delta\setminus 2^{k-1}\Delta)}\nonumber
        \\
        &\leq \sum_{k\geq 2}\sum_{m\geq 2}\frac{1}{\sqrt{s^2-\tau^2}}e^{-c\frac{2^{2k}l(\Delta)^2}{s^2-\tau^2} - c\frac{2^{2m}l(\Delta)^2}{\tau^2}}\Vert\partial_s \AP(\cdot,s)\Vert_\infty\Vert(f-(f)_{2^k\Delta})\Vert_{L^2(2^{k+m}\Delta)}\nonumber
         \\
        & \qquad + \sum_{k\geq 2}\frac{1}{\sqrt{s^2-\tau^2}}e^{-c\frac{2^{2k}l(\Delta)^2}{s^2-\tau^2}}\Vert\partial_s \AP(\cdot,s)\Vert_{L^\infty}\Vert f-(f)_{2^k\Delta}\Vert_{L^2(2^{k+1}\Delta)}.\nonumber
    \end{align*}
    The second one yields
    \begin{align*}
    II&\leq \sum_{m\geq 3}\frac{\tau}{\sqrt{s^2-\tau^2}}\Vert\partial_s \AP(x,s)\nabla e^{-\tau^2L_{||}^s}(\eta_m(f-(f)_{4\Delta}))\Vert_{L^2(2\Delta)}\nonumber
        \\
        &\qquad + \frac{\tau}{\sqrt{s^2-\tau^2}}\Vert\partial_s \AP(x,s)\nabla e^{-\tau^2L_{||}^s}(\chi_{4\Delta}(f-(f)_{4\Delta}))\Vert_{L^2(2\Delta)}\nonumber
        \\
        &\leq \sum_{m\geq 3}\frac{1}{\sqrt{s^2-\tau^2}}e^{-c\frac{2^{2m}l(\Delta)^2}{\tau^2}}\Vert\partial_s \AP(\cdot,s)\Vert_\infty\Vert(f-(f)_{4\Delta})\Vert_{L^2(2^m\Delta)}\nonumber
        \\
        &\qquad + \frac{1}{\sqrt{s^2-\tau^2}}\Vert\partial_s \AP(\cdot,s)\Vert_\infty\Vert(f-(f)_{4\Delta})\Vert_{L^2(4\Delta)}.
    \end{align*}
    By Poincar\'{e}'s inequality, we observe
    \begin{align*}
        &\Vert f-(f)_{2^k\Delta}\Vert_{L^2(2^{k+m}\Delta)}
        \\
        &\qquad\leq \Vert f-(f)_{2^{k+m}\Delta}\Vert_{L^2(2^{k+m}\Delta)} + \sum_{l=1}^m \Vert(f)_{2^{k+l}\Delta}-(f)_{2^{k+l-1}\Delta}\Vert_{L^2(2^{k+m}\Delta)}
        \\
        &\qquad\lesssim 2^{(k+m)}l(\Delta)\Vert \nabla_{||}f\Vert_{L^2(2^{k+m}\Delta)} + \sum_{l=1}^m 2^{k+l}l(\Delta)\Vert\nabla_{||}f\Vert_{L^2(2^{k+m}\Delta)}
        \\
        &\qquad\lesssim 2^{(k+m)}l(\Delta)\Vert \nabla_{||}f\Vert_{L^2}\lesssim  2^{(k+m)(\frac{n}{2}+1)}l(\Delta)|\Delta|^{1/2}\Vert \nabla_{||}f\Vert_{L^\infty},
    \end{align*}
\begin{comment}
    and by Poincar\'{e} inequality (???)
    \begin{align*}
        \Vert \nabla(\chi(f-(f)_{2^k\Delta}))\Vert_{L^2(2^{k}\Delta)}&\leq \Vert \nabla_{||}f\Vert_{L^2(2^{k+1}\Delta)} + \frac{1}{2^kl(\Delta)}\Vert f-(f)_{2^k\Delta} \Vert_{L^2(2^{k+1}\Delta)}\lesssim \Vert \nabla_{||}f\Vert_{L^2(2^{k+1}\Delta)}
        \\
        &\lesssim 2^{n(k+1)/2}|\Delta|^{1/2}\Vert \nabla_{||}f\Vert_{\infty}
    \end{align*}
\end{comment}
    and hence since \(\tau,\sqrt{s^2-\tau^2}\leq s\leq l(\Delta)\)
    \begin{align*}
        I+II&\lesssim\Vert\partial_s \AP(\cdot,s)\Vert_\infty\Vert\nabla_{||}f\Vert_{L^\infty}|\Delta|^{1/2}\Big(\sum_{k\geq 2}\sum_{m\geq 2}\frac{2^{(k+m)(\frac{n}{2}+1)}l(\Delta)}{\sqrt{s^2-\tau^2}}e^{-c\frac{2^{2k}l(\Delta)^2}{s^2-\tau^2} - c\frac{2^{2m}l(\Delta)^2}{\tau^2}}
         \\
        & \qquad + \sum_{k\geq 2}\frac{\tau 2^{(k+1)\frac{n}{2}}}{\sqrt{s^2-\tau^2}}e^{-c\frac{2^{2k}l(\Delta)^2}{s^2-\tau^2}} + \sum_{m\geq 3}\frac{2^{(m+1)(\frac{n}{2}+1)}l(\Delta)}{\sqrt{s^2-\tau^2}}e^{-c\frac{2^{2m}l(\Delta)^2}{\tau^2}} + \frac{\tau}{\sqrt{s^2-\tau^2}}\Big).
    \end{align*}
    We consider the sums as Riemann sums of integral. For instance, the second term corresponds to the integral
    \[\int_0^\infty\frac{\tau}{\sqrt{s^2-\tau^2}}x^{\frac{n}{2}-1}e^{-cx^2\frac{l(\Delta)^2}{s^2-\tau^2}}dx,\]
    which after the change of variables \(y=\frac{l(\Delta)^2}{s^2-\tau^2}x\) gives a convergent integral
    \[\frac{\tau\sqrt{s^2-\tau^2}}{l(\Delta)\sqrt{s^2-\tau^2}}\int_0^\infty y^{\frac{n}{2}-1}e^{-y^2}dy\lesssim C.\]
    Similar arguments for the other two sums lead to the conclusion
    \[I+II\lesssim \Vert\partial_s \AP(\cdot,s)\Vert_\infty\Vert\nabla_{||}f\Vert_{L^\infty}|\Delta|^{1/2}\Big(1+ \frac{\tau}{\sqrt{s^2-\tau^2}}\Big).\]
    We can now complete the proof of \eqref{item:LocalSquareBoundW2} with the condition \eqref{cond:mixedL1LInftyCond} and
    \begin{align*}
        &\int_0^{l(\Delta)}\frac{1}{s}\Big(\int_0^s \big(1+\frac{\tau}{\sqrt{s^2-\tau^2}}\big)\Vert\partial_s \AP(\cdot,s)\Vert_\infty\Vert\nabla_{||}f\Vert_{L^\infty}|\Delta|^{1/2} d\tau\Big)^2ds
        \\
        &\qquad \lesssim \Vert\nabla_{||}f\Vert_{L^\infty}^2|\Delta|\int_0^{l(\Delta)} \Vert\partial_s \AP(\cdot,s)\Vert_\infty^2sds\lesssim \Vert\nabla_{||}f\Vert_{L^\infty}^2|\Delta|.
    \end{align*}

    For \eqref{item:LocalSquareBoundNablaW2} we have by Cacciopolli type inequality \refprop{prop:CacciopolliTypeInequality}
    \begin{align*}
        \int_{T(\Delta)}\frac{|t\nabla W_2f(x,t)|^2}{t}dxdt&\lesssim \int_{T(2\Delta)}\frac{|W_2f(x,t)|^2}{t}dxdt
        \\
        &\qquad + |\Delta|\int_0^{2l(\Delta)}\Vert\partial_t A(\cdot,t)\Vert_{L^\infty(\mathbb{R}^n)}^2t\Vert\nabla_{||}f\Vert_{L^\infty(\mathbb{R}^n)}^2dt
        \\
        &\lesssim |\Delta| \Vert \nabla_{||}f\Vert_{L^\infty(\mathbb{R}^n)}.
    \end{align*}
\end{proof}

\section{Square function bounds on Hardy atoms}

\begin{defin}
    Let \(1<\beta<\infty\). We call a function \(a:\mathbb{R}^n\to\mathbb{R}\) a \textit{homogeneous Hardy-Sobolev \(\beta\)-atom} associated to a boundary ball \(\Delta\subset\mathbb{R}^n\) if
    \begin{enumerate}[(i)]
        \item \(\mathrm{supp}(a)\subset \Delta\); and
        \item \(\Vert \nabla a\Vert_{L^\beta(\mathbb{R}^n)}\leq |\Delta|^{-\frac{1}{\beta}} \).
    \end{enumerate}
    If \(f\) can be written as
    \begin{align}f=\sum_{j=1}^\infty\lambda_ja_j\label{eq:AtomicHardyDecomposition}\end{align}
    for \(\beta\)-atoms \(a_j\) and coefficients \(\lambda_j\in\mathbb{R}\) with \(\sum_{j=1}^\infty|\lambda_j|<\infty\), we say that \(f\in \dot{HS}^{1,\beta}_{atom}\), where \(\Vert f\Vert_{\dot{HS}^{1,\beta}_{atom}}:=\inf\sum_{j=1}^\infty|\lambda_j|\) with an infimum that is take over all choices of decompositions \eqref{eq:AtomicHardyDecomposition}. 
\end{defin}

\begin{lemma}\label{lemma:HardyNormBounds}
    Let \(f\in \dot{HS}_{atom}^{1,2}(\mathbb{R}^n)\). Then
    \begin{enumerate}
        \item \(\Vert\mathcal{A}(\partial_t\mathcal{P}f)\Vert_{L^1(\mathbb{R}^n)}\lesssim \Vert \nabla_{||}f\Vert_{\dot{HS}^{1,2}(\mathbb{R}^n)}\)\label{item:HardySquareBound}
        \item \(\Vert\mathcal{A}(t\nabla \partial_t\mathcal{P}f)\Vert_{L^1(\mathbb{R}^n)}\lesssim \Vert \nabla_{||}f\Vert_{\dot{HS}_{atom}^{1,2}(\mathbb{R}^n)}\)\label{item:HardySquareBoundNabla}
    \end{enumerate}
\end{lemma}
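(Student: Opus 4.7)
By sublinearity of the area function and the atomic decomposition \eqref{eq:AtomicHardyDecomposition}, it suffices to prove both estimates with a constant independent of the atom, for a single Hardy-Sobolev $2$-atom $a$ supported in a ball $\Delta$ with $\Vert\nabla a\Vert_{L^2}\leq|\Delta|^{-1/2}$ (so $\Vert a\Vert_{L^1}\lesssim r_\Delta$ by Poincaré, since $a$ vanishes outside $\Delta$). Writing $\partial_t\mathcal{P}_t a=W_1 a+W_2 a$ and splitting
\[\int_{\mathbb{R}^n}\mathcal{A}(W_i a)\,dx=\int_{4\Delta}\mathcal{A}(W_i a)\,dx+\sum_{k\geq 2}\int_{A_k}\mathcal{A}(W_i a)\,dx,\qquad A_k:=2^{k+1}\Delta\setminus 2^k\Delta,\]
the local piece is immediate from Cauchy-Schwarz and the global $L^2$ area bounds \refprop{prop:L^2SquareFctBoundW_1}, \refprop{prop:L^2SquareFctBoundW_2}: one has $\int_{4\Delta}\mathcal{A}(W_i a)\,dx\leq|4\Delta|^{1/2}\Vert\mathcal{A}(W_i a)\Vert_{L^2}\lesssim|\Delta|^{1/2}\Vert\nabla a\Vert_{L^2}\lesssim 1$, and similarly for the $t\nabla W_i a$ variants.

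The substantive step is the far piece. Applying Cauchy-Schwarz on each $A_k$ would be too lossy, because it costs $|A_k|^{1/2}\sim 2^{kn/2}|\Delta|^{1/2}$, so the plan is instead to obtain a pointwise estimate of the form
\[\mathcal{A}(W_i a)(x)+\mathcal{A}(t\nabla W_i a)(x)\lesssim\frac{2^{-k(n+1)}}{|\Delta|}\qquad\textrm{for all }x\in A_k,\]
which integrates to $\int_{A_k}\lesssim 2^{-k}$ and sums in $k$. For $W_1$, the identity $W_1 a(y,t)=2tL^t_{||}e^{-t^2L^t_{||}}a(y)$ and the kernel bound \refprop{Prop:KernelBounds} on $\partial_s K_s$ give $|W_1 a(y,t)|\lesssim t^{-n-1}r_\Delta\,e^{-c\,\mathrm{dist}(y,\Delta)^2/t^2}$. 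Split the cone $\Gamma_\alpha(x)$ at height $t\sim 2^k r_\Delta$: below this height $(y,t)\in\Gamma_\alpha(x)$ forces $\mathrm{dist}(y,\Delta)\gtrsim 2^k r_\Delta$ and the Gaussian dominates, while above it the prefactor $t^{-n-1}$ combined with the cone volume $t^n$ is already sufficient; the change of variables $u=2^k r_\Delta/t$ closes the computation. The bound for $t\nabla W_1 a$ is obtained by passing to the mean-valued reformulation of $\mathcal{A}$ and invoking the Caccioppoli inequality of \refprop{prop:CacciopolliTypeInequality}, which reduces it to the $W_1 a$ case.

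The $W_2$ contribution is treated by the same scheme, but requires unpacking the nested structure
\[W_2 a(y,t)=\int_0^t 2\tau\,e^{(t^2-\tau^2)L^t_{||}}\mathrm{div}\bigl(\partial_t\AP(\cdot,t)\nabla e^{-\tau^2L^t_{||}}a\bigr)(y)\,d\tau.\]
Using the identity $|e^{sL}\mathrm{div}\vec F(y)|\lesssim s^{-(n+1)/2}\int e^{-c|y-z|^2/s}|\vec F(z)|\,dz$ (obtained by integrating by parts in the kernel representation), the gradient kernel bound on $\nabla e^{-\tau^2L^t_{||}}a$, and the hypothesis $|\partial_t A|\leq C/t$, one reduces the integrand to a double Gaussian convolution with exponents $(t^2-\tau^2)^{-1}$ and $\tau^{-2}$. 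Splitting the $\tau$-integral at $\tau=t/\sqrt 2$ and decomposing $y$ annularly around $\Delta$ exactly as in the proof of \reflemma{lemma:CarlesonFunctionBoundW_2} produces the same pointwise bound $2^{-k(n+1)}/|\Delta|$ on $A_k$, with the remaining $\tau$-integration absorbed by \eqref{cond:mixedL1LInftyCond}. The $t\nabla W_2 a$ case follows from the Caccioppoli-type inequality of \refprop{prop:CacciopolliTypeInequality}, whose additional error term is again controlled by \eqref{cond:mixedL1LInftyCond}. The main obstacle is this $W_2$ bookkeeping: the two Gaussian exponents pull in opposite directions, and achieving a decay strictly faster than $2^{-kn}$ on $A_k$ requires carefully tracking the interplay between the $\tau$-splitting, the annular location of $y$, and the $L^1$-$L^\infty$ condition on $\partial_t A$.
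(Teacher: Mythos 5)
Your overall architecture is the same as the paper's: reduction to a single Hardy--Sobolev atom, the local piece by Cauchy--Schwarz against the global \(L^2\) square-function bounds, the far piece by annular decomposition with a pointwise-in-annulus bound on the cone integral, the cone split at height \(t\sim 2^k r_\Delta\), and the gradient variants via the Caccioppoli-type inequality of \refprop{prop:CacciopolliTypeInequality}. Your \(W_1\) estimate is exactly the paper's computation (the time-derivative kernel bounds of \refprop{Prop:KernelBounds} plus \(\Vert a\Vert_{L^1}\lesssim r_\Delta\)), and your remark that any decay strictly faster than \(2^{-kn}\) suffices is consistent with the paper, which in fact only obtains \(2^{-k(n+1/2)}/|\Delta|\) for the truncated-cone part of \(W_2\) rather than your target \(2^{-k(n+1)}/|\Delta|\).

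The genuine gap is in your treatment of \(W_2\). Both ingredients you invoke there — the identity \(|e^{sL}\mathrm{div}\vec F(y)|\lesssim s^{-(n+1)/2}\int e^{-c|y-z|^2/s}|\vec F(z)|\,dz\) (which amounts to a pointwise Gaussian bound on \(\nabla_z K_s(y,z)\)) and a ``gradient kernel bound'' on \(\nabla e^{-\tau^2L^t_{||}}a\) — are pointwise Gaussian estimates for spatial gradients of the heat kernel of \(L^t_{||}=\mathrm{div}_{||}(\AP\nabla_{||}\cdot)\). For merely bounded, measurable (possibly nonsymmetric) coefficients these are false in general for \(n\geq 2\): De Giorgi--Nash gives only Hölder continuity of the kernel, and pointwise Gaussian gradient bounds would force uniform \(L^p\)-boundedness of \(\sqrt{s}\,\nabla e^{-sL}\) for all \(p\), which is known to fail for rough coefficients. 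This is precisely why \refprop{Prop:KernelBounds} states only bounds on \(\partial_t^l K_t\) and why gradients appear in the paper only through the \(L^2\) off-diagonal estimates of \refprop{prop:Off-diagonalEstimates}. The paper's proof of the far-field \(W_2\) bound avoids your step by duality: it pairs \(e^{-(t^2-\tau^2)L^t}\mathrm{div}(\partial_t\AP\nabla e^{-\tau^2L^t}a)\) with \(g\in L^2(\Delta_{t/2}(y))\), integrates by parts so that the divergence lands on \(\nabla e^{-(t^2-\tau^2)L^{t,*}}g\), which is controlled in \(L^2\) (globally and on far regions) by Gaffney estimates, and controls \(\Vert\nabla e^{-\tau^2L^t}a\Vert_{L^2}\) on near and far sets by converting it, via an integration-by-parts/Caccioppoli identity, into bounds on \(e^{-\tau^2L^t}a\) and \(\tau L^t e^{-\tau^2L^t}a\), to which the kernel bounds do apply; only then are \(|\partial_t A|\leq C/t\) and \eqref{cond:mixedL1LInftyCond} used to integrate in \(\tau\) and \(t\). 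Your scheme can be repaired by substituting this \(L^2\)/duality argument for the pointwise gradient bounds, but as written the key \(W_2\) step rests on estimates unavailable under the paper's hypotheses.
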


\begin{proof}
    First, we note that it is enough to show
    \[\Vert\mathcal{A}(\partial_t\mathcal{P}f)\Vert_{L^1(\mathbb{R}^n)}, \Vert\mathcal{A}(t\nabla \partial_t\mathcal{P}f)\Vert_{L^1(\mathbb{R}^n)}\leq C\]
    for all homogeneous Hardy-Sobolev \(1/2\)-atoms \(f\) associated with \(\Delta\), whence we assume that \(f\) is such an atom going forward.
    
    We begin with showing \eqref{item:HardySquareBound}. We split the integral into a local and a far away part 
    \begin{align*}
        \Vert\mathcal{A}(\partial_t\mathcal{P}f)\Vert_{L^1(\mathbb{R}^n)}=\Vert\mathcal{A}(\partial_t\mathcal{P}f)\Vert_{L^1(\mathbb{R}^n\setminus 5\Delta)} + \Vert\mathcal{A}(\partial_t\mathcal{P}f)\Vert_{L^1(5\Delta)}.
    \end{align*}
    For the local part  we have by Hölder's inequality, \refprop{prop:L^2SquareFctBoundW_1}, \refprop{prop:L^2SquareFctBoundW_2} and the properties of the Hardy space that
    \begin{align*}
        \Vert\mathcal{A}(\partial_t\mathcal{P}f)\Vert_{L^1(5\Delta)}\lesssim \Vert\mathcal{A}(\partial_t\mathcal{P}f)\Vert_{L^2(5\Delta)}|\Delta|^{1/2}\lesssim \Vert\nabla_{||} f\Vert_{L^2(\mathbb{R}^n)}|\Delta|^{1/2}\lesssim \Vert \nabla_{||}f\Vert_{H^1(\mathbb{R}^n)}.
    \end{align*}
    For the away part, we split into
    \begin{align*}
        \Vert\mathcal{A}(\partial_t\mathcal{P}f)\Vert_{L^1(\mathbb{R}^n\setminus 5\Delta)}&=\int_{\partial\Omega\setminus 5\Delta }\Big(\int_{\Gamma(y)}\frac{|W_1(f)|^2}{t^{n+1}}dxdt\Big)^{1/2}dy
        \\
        &\qquad + \int_{\partial\Omega\setminus 5\Delta}\Big(\int_{\Gamma(y)}\frac{|W_2(f)|^2}{t^{n+1}}dxdt\Big)^{1/2}dy
        \\
        &=I_1+I_2.
    \end{align*}

    First, we note that the kernel bounds in \refprop{Prop:KernelBounds} imply
    \begin{align}
        \int_{\Delta_{t/2}(y)}|tL^te^{-t^2L^t}f(x,t)|^2dx\lesssim \begin{cases} \frac{1}{t^{2n+2}}e^{-c\frac{\mathrm{dist}(\Delta_{t/2}(y),\Delta)^2}{t^2}}\Vert f\Vert_{L^1(\Delta)}^2 &\textrm{if }\mathrm{dist}(\Delta_{t/2}(y),\Delta)>0, \\ \frac{1}{t^{2n+2}}\Vert f\Vert_{L^1(\Delta)}^2& \mathrm{else}.\end{cases}\label{eq:CasesForL^1BoundofTermInProof}
    \end{align}
    Making use of \eqref{eq:CasesForL^1BoundofTermInProof}, we observe that if \(\mathrm{dist}(y,\Delta)\approx 2^jl(\Delta)\) then
    \begin{align*}
        &\int_{\Gamma(y)}\frac{|W_1(f)|^2}{t^{n+1}}dxdt=\int_0^\infty \frac{1}{t^{n+1}}\int_{\Delta_{t/2}(y)}|tL^te^{-t^2L^t}f(x,t)|^2dx dt
        \\
        &\qquad=\int_0^{2^{j-1}l(\Delta)} \frac{e^{-2\frac{|2^{j}l(\Delta)-\frac{t}{2}|^2}{t^2}}}{t^{2n+3}}\Vert f\Vert_{L^1(\mathbb{R}^n)}^2dx dt + \int_{2^{j-1}l(\Delta)}^\infty \frac{1}{t^{2n+3}}\Vert f\Vert_{L^1(\mathbb{R}^n)}^2 dt
        \\
        &\qquad\lesssim\int_0^{2^{j-1}l(\Delta)} \frac{l(\Delta)^{n+2}e^{-\frac{2^{2(j-1)}l(\Delta)^2}{t^2}}}{t^{2n+3}}\Vert \nabla_{||}f\Vert_{L^2(\mathbb{R}^n)}^2dx dt
        \\
        &\qquad\qquad + \int_{2^{j-1}l(\Delta)}^\infty \frac{l(\Delta)^{n+2}}{t^{2n+3}}\Vert \nabla_{||}f\Vert_{L^2(\mathbb{R}^n)}^2 dt
        \\
        &\qquad \lesssim \frac{2^{-j(2n+2)}}{l(\Delta)^{n}}\Vert \nabla_{||}f\Vert_{L^2(\mathbb{R}^n)}^2\lesssim \frac{2^{-j(2n+2)}}{l(\Delta)^{2n}}.
    \end{align*}
    Summing over annuli, we obtain
    \begin{align*}
        I_1&\lesssim \sum_{j\geq 3}\int_{2^j\Delta\setminus 2^{j-1}\Delta }\Big(\int_{\Gamma(y)}\frac{|W_1(f)|^2}{t^{n+1}}dxdt\Big)^{1/2}dy \lesssim \sum_{j\geq 3}\int_{2^j\Delta\setminus 2^{j-1}\Delta }\frac{2^{-j(n+1)}}{l(\Delta)^{n}}dy 
        \\
        &\lesssim \sum_{j\geq 3} 2^{-j}\lesssim C.
    \end{align*}

    \begin{comment}
    For the second term \(I_2\) we first observe that if \(\mathrm{dist}(y,\Delta)\approx 2^jl(\Delta)\) then
    \begin{align*}
        \int_{\Gamma(y)}\frac{|W_2(f)|^2}{t^{n+1}}dxdt&=\int_0^{2^{j-1}l(\Delta)} \frac{1}{t^{n+1}}\int_{\Delta_{t/2}(y)}\Big|\int_0^t2\tau e^{-(t^2-\tau^2)L^t}\mathrm{div}(\partial_t\AP\nabla e^{-\tau^2L^t}f)d\tau\Big|^2dx dt
        \\
        &\qquad+ \int_{2^{j-1}l(\Delta)}^\infty \frac{1}{t^{n+1}}\int_{\Delta_{t/2}(y)}\Big|\int_0^t2\tau e^{-(t^2-\tau^2)L^t}\mathrm{div}(\partial_t\AP\nabla e^{-\tau^2L^t}f)d\tau\Big|^2dx dt=I_{21}+I_{22}.
    \end{align*}
    We have
    \begin{align*}
        I_{22}&\lesssim \int_{2^{j-1}l(\Delta)}^\infty \frac{1}{t^{n+1}}\Big(\int_0^t\Vert 2\tau e^{-(t^2-\tau^2)L^t}\mathrm{div}(\partial_t\AP\nabla e^{-\tau^2L^t}f)\Vert_{L^2(\mathbb{R}^n)}d\tau\Big)^2 dt
        \\
        &\lesssim \int_{2^{j-1}l(\Delta)}^\infty \frac{1}{t^{n+1}}\Big(\int_0^t\frac{\tau}{\sqrt{t^2-\tau^2}}\Vert\partial_t\AP\Vert_\infty\Vert\nabla_{||}f\Vert_{L^2(\mathbb{R}^n)}d\tau\big)^2 dt
        \\
        &\lesssim \int_{2^{j-1}l(\Delta)}^\infty \frac{\Vert\partial_t\AP\Vert_\infty^2t\Vert\nabla_{||}f\Vert_{L^2(\mathbb{R}^n)}^2}{t^{n}}dt
        \\
        &\lesssim \Vert\nabla_{||}f\Vert_{L^2(\mathbb{R}^n)}^2\Big( \int_{2^{j-1}l(\Delta)}^\infty \Vert\partial_t\AP\Vert_\infty^2tdt\Big)2^{-jn}l(\Delta)^{-n}
    \end{align*}

    %================================
\color{blue}
\end{comment}
    Next, let us bound \(I_2\).
    To start with we fix \(t\) and we look at the term 
    \[\int_{\Delta_{t/2}(y)}\Big|\int_0^t2\tau e^{-(t^2-\tau^2)L^t}\mathrm{div}(\partial_t\AP\nabla e^{-\tau^2L^t}f)d\tau\Big|^2dx.\]

    First assume that \(0\leq t\leq 2^{l-1}l(\Delta)\). Then \(\Delta_{t/2}(y)\subset \Delta_{2^{l-1}l(\Delta)}(y):=\tilde{\Delta}\), and we have for a dualising function \(g\in L^2(\tilde{\Delta})\) 
    \begin{align}
        &\Vert\tau e^{-(t^2-\tau^2)L_{||}^t}\mathrm{div}(\partial_t \AP(x,t)\nabla e^{-\tau^2L_{||}^t}f)\Vert_{L^2(\tilde{\Delta})}\nonumber
        \\
        &=\tau\int_{\mathbb{R}}\nabla e^{-(t^2-\tau^2)L_{||}^t} g \cdot\partial_t \AP(x,t)\nabla e^{-\tau^2L_{||}^t}f dx\nonumber
        \\
        &\lesssim
            \tau\Vert\partial_t\AP\Vert_\infty\big(\Vert \nabla e^{-(t^2-\tau^2)L_{||}^t} g\Vert_{L^2(2\tilde{\Delta})} \Vert \nabla e^{-\tau^2L_{||}^t}f\Vert_{L^2(2\tilde{\Delta})}\nonumber
        \\
        &\qquad+ \Vert \nabla e^{-(t^2-\tau^2)L_{||}^t} g\Vert_{L^2(2\Delta)} \Vert \nabla e^{-\tau^2L_{||}^t}f\Vert_{L^2(2\Delta)}\nonumber
        \\
        &\qquad+ \Vert \nabla e^{-(t^2-\tau^2)L_{||}^t} g\Vert_{L^2(\mathbb{R}^n\setminus 2\tilde{\Delta})} \Vert \nabla e^{-\tau^2L_{||}^t}f\Vert_{L^2(\mathbb{R}^n\setminus 2\Delta)}\big).\label{eq:ProofUsefulEquation1}
    \end{align}
    Let us note that by using a smooth cut-off function \(\eta\) with \(\mathrm{supp}(\eta)\subset\frac{3}{2}\tilde{\Delta}\) and \(\eta\equiv 1\) on \(\tilde{\Delta}\) we have
    \[\int_{\tilde{\Delta}}|\nabla e^{-\tau^2L_{||}^t}f|^2dx\lesssim \int_{2\tilde{\Delta}} W_1f \cdot \mathcal{P}_tf\eta^2 dx + \int_{2\tilde{\Delta}} \nabla e^{-\tau^2L_{||}^t}f\cdot\nabla \eta \mathcal{P}_tf\eta dx\]
    which implies \(\int_{\tilde{\Delta}}|\nabla e^{-\tau^2L_{||}^t}f|^2dx\lesssim \int_{\tilde{\Delta}}|W_1f|^2dx + \frac{1}{t^2}\int_{\tilde{\Delta}}|\mathcal{P}_tf|^2dx\). This observation holds not only for the set \(\tilde{\Delta}\), but also for integrals over other sets and enlargements thereof.
    For each of those terms we can use a similar observation to \eqref{eq:CasesForL^1BoundofTermInProof} and bound \(\eqref{eq:ProofUsefulEquation1}\) by
    \begin{align*}
        &\Vert\partial_t\AP\Vert_\infty\Big[\Big(\int_{2\tilde{\Delta}}\big(\frac{\tau}{\sqrt{t^2-\tau^2}\tau^{n+1}}e^{-\frac{2^{2(j-1)} l(\Delta)^2}{\tau^2}}\Vert f\Vert_{L^1(\Delta)}\big)^2dx\Big)^{1/2}\Vert g\Vert_{L^2}
        \\
        &\qquad +  \Big(\int_{2\Delta}\big(\frac{1}{\sqrt{t^2-\tau^2}^{n+1}}e^{-\frac{2^{2(j-1)} l(\Delta)^2}{t^2-\tau^2}}\Vert g\Vert_{L^1(\tilde{\Delta})}\big)^2dx\Big)^{1/2}\Vert f\Vert_{L^2}
        \\
        &\qquad +  \tau\Big(\int_{\mathbb{R}^n\setminus 2\tilde{\Delta}}\big(\frac{1}{\sqrt{t^2-\tau^2}^{n+1}}e^{-\frac{\mathrm{dist}(x,\tilde{\Delta})^2}{t^2-\tau^2}}\Vert g\Vert_{L^1(\tilde{\Delta})}\big)^2dx\Big)^{1/2} 
        \\
        &\qquad\qquad \cdot\Big(\int_{\mathbb{R}^n\setminus 2\Delta}\big(\frac{1}{\tau^{n+1}}e^{-\frac{\mathrm{dist}(x,\Delta)^2}{\tau}}\Vert f\Vert_{L^1(\Delta)}\big)^2dx\Big)^{1/2} \Big].
    \end{align*}
    Further, we can use Hölder's inequality and Poincar\'{e}'s inequality and continue with
    \begin{align*}
        &\Vert\partial_t\AP\Vert_\infty\Big[\frac{\tau 2^{jn/2}l(\Delta)^n}{\sqrt{t^2-\tau^2}\tau^{n+1}}e^{-\frac{2^{2(j-1)} l(\Delta)^2}{\tau^2}}\Vert f\Vert_{L^2(\Delta)}\Vert g\Vert_{L^2}
        \\
        &\qquad +  \frac{2^{jn/2}l(\Delta)^n}{\sqrt{t^2-\tau^2}^{n+1}}e^{-\frac{2^{2(j-1)} l(\Delta)^2}{t^2-\tau^2}}\Vert g\Vert_{L^2(\tilde{\Delta})}\Vert f\Vert_{L^2}
        \\
        &\qquad + \frac{2^{jn/2}l(\Delta)^{n/2}}{\sqrt{t^2-\tau^2}^{n+1}}\Vert g\Vert_{L^2(\tilde{\Delta})}\Big(\int_{2^{j-1}l(\Delta)}^\infty r^{n-1}e^{-\frac{r^2}{t^2-\tau^2}}dr\Big)^{1/2}
        \\
        &\qquad\qquad\cdot\frac{l(\Delta)^{n/2}}{\tau^{n}}\Vert f\Vert_{L^2(\Delta)}\Big(\int_{l(\Delta)}^\infty r^{n-1}e^{-\frac{r^2}{\tau^2}}dr\Big)^{1/2} \Big]
        \\
        &\lesssim \Vert\partial_t\AP\Vert_\infty\Vert \nabla_{||}f\Vert_{L^2(\Delta)}\Vert g\Vert_{L^2}\Big[\frac{\tau 2^{jn/2}l(\Delta)^{n+1}}{\sqrt{t^2-\tau^2}\tau^{n+1}}e^{-\frac{2^{2(j-1)} l(\Delta)^2}{\tau^2}}  +  \frac{2^{jn/2}l(\Delta)^{n+1}}{\sqrt{t^2-\tau^2}^{n+1}}e^{-\frac{2^{2(j-1)} l(\Delta)^2}{t^2-\tau^2}}
        \\
        &\qquad + \frac{2^{jn/2}l(\Delta)^{n+1}}{\sqrt{t^2-\tau^2}^{n/2+1}\tau^{n/2}}\Big(\int_{2^{j-1}l(\Delta)/\sqrt{t^2-\tau^2}}^\infty r^{n-1}e^{-r^2}dr\Big)^{1/2}\Big(\int_{2^{l(\Delta)}/\tau}^\infty r^{n-1}e^{-r^2}dr\Big)^{1/2}\Big].
    \end{align*}
    The integral expressions are a little bit delicate. If \(n\) is even, than \(\int_{s}^\infty r^{n-1}e^{-r^2}dr=P(a)e^{-a^2}\) where \(P\) is a polynomial of degree \(n-2\). If \(n\) is odd, we can bound the integral above by the same expression with \(n+1\) instead, since we only integrate over numbers greater than \(1\). Hence, we can bound the first integral by \(\frac{2^{(j-1)(n-2)}l(\Delta)^{n-2}}{\sqrt{t^2-\tau^2}^{n-2}}\) or \(\frac{2^{(j-1)(n-1)}l(\Delta)^{n-1}}{\sqrt{t^2-\tau^2}^{n-1}}\). Since \(\frac{2^{j-1}l(\Delta)}{\sqrt{t^2-\tau^2}}\lesssim 1\), we can continue to only look at the first case without loss of generality.
    We obtain
    \begin{align*}
        &\lesssim \Vert\partial_t\AP\Vert_\infty\Vert\nabla_{||}f\Vert_{L^2}\Big[\frac{\tau 2^{jn/2}l(\Delta)^{n+1}}{\sqrt{t^2-\tau^2}\tau^{n+1}}e^{-\frac{2^{2(j-1)} l(\Delta)^2}{\tau^2}} +  \frac{2^{jn/2}l(\Delta)^{n+1}}{\sqrt{t^2-\tau^2}^{n+1}}e^{-\frac{2^{2(j-1)} l(\Delta)^2}{t^2-\tau^2}}
        \\
        &\qquad + \frac{2^{j(n-1)}l(\Delta)^{2n-1}}{\sqrt{t^2-\tau^2}^{n}\tau^{n-1}}e^{-\frac{2^{2(j-1)} l(\Delta)^2}{t^2-\tau^2}} \Big]
        \\
        &\lesssim \Vert\partial_t\AP\Vert_\infty\Big(\frac{l(\Delta)^{n/2+1}}{\sqrt{t^2-\tau^2}\tau^{n}}e^{-\frac{2^{2(j-1)} l(\Delta)^2}{\tau^2}} +\frac{ l(\Delta)^{n/2+1}}{\sqrt{t^2-\tau^2}^{n+1}}e^{-\frac{2^{2(j-1)} l(\Delta)^2}{t^2-\tau^2}}
        \\
        &\qquad + \frac{2^{j(n-1)}l(\Delta)^{3n/2-1}}{\sqrt{t^2-\tau^2}^{n}\tau^{n-1}}e^{-\frac{2^{2(j-1)} l(\Delta)^2}{t^2-\tau^2}}\Big).
    \end{align*}
    In the last step we used that \(f\) is a Hardy-Sobolev atom.
    \medskip
    
    Now, we can estimate the integral over a truncated cone for a point \(y\in\partial\Omega\) with \(\mathrm{dist}(y,\Delta)\approx 2^jl(\Delta)\). We have
    \begin{align}
        &\Big(\int_{\Gamma^{2^jl(\Delta)}(y)}\frac{|W_2(f)|^2}{t^{n+1}}dxdt\Big)^{1/2}\nonumber
        \\
        &=\Big(\int_0^{2^{j-1}l(\Delta)} \frac{1}{t^{n+1}}\int_{\Delta_{t/2}(y)}\Big|\int_0^t2\tau e^{-(t^2-\tau^2)L^t}\mathrm{div}(\partial_t\AP\nabla e^{-\tau^2L^t}f)d\tau\Big|^2dx dt\Big)^{1/2}\nonumber
        \\
        &\lesssim \Big(\int_0^{2^{j-1}l(\Delta)} \frac{\Vert\partial_t\AP\Vert_\infty^2}{t^{n+1}}\Big(\int_0^t\frac{2^{jn/2}l(\Delta)^{n/2+1}}{\sqrt{t^2-\tau^2}\tau^{n}}e^{-\frac{2^{2(j-1)} l(\Delta)^2}{\tau^2}}\nonumber
        \\
        &\qquad +\frac{2^{jn/2}l(\Delta)^{n/2+1}}{\sqrt{t^2-\tau^2}^{n+1}}e^{-\frac{2^{2(j-1)} l(\Delta)^2}{t^2-\tau^2}} + \frac{2^{j(n-1)}l(\Delta)^{3n/2-1}}{\sqrt{t^2-\tau^2}^{n}\tau^{n-1}}e^{-\frac{2^{2(j-1)} l(\Delta)^2}{t^2-\tau^2}}d\tau\Big)^2 dt\Big)^{1/2}\nonumber
        \\
        &\lesssim \Big(\int_0^{2^{j-1}l(\Delta)} \Vert\partial_t\AP\Vert_\infty^2t\Big(\frac{2^{jn}l(\Delta)^{n+2}}{t^{3n+2}} + \frac{2^{2j(n-1)}l(\Delta)^{3n-2}}{t^{5n-2}}\Big)
        \\
        &\qquad\qquad \cdot e^{-2\frac{2^{2(j-1)} l(\Delta)^2}{t^2}}\Big(\int_0^t\frac{1}{\sqrt{t^2-\tau^2}}d\tau\Big)^2 dt\Big)^{1/2}\nonumber
        \\
        &\lesssim \Big(\int_0^{2^{j-1}l(\Delta)}\Vert\partial_t\AP\Vert_\infty^2t \Big(\frac{1}{2^{j(2n+2)}l(\Delta)^{n}} + \frac{1}{2^{3jn}l(\Delta)^{n}}\Big) dt\Big)^{1/2}\nonumber
        \\
        &\lesssim \Big(\frac{1}{2^{j(n+1)}}+\frac{1}{2^{j(n+n/2)}}\Big)\frac{1}{l(\Delta)^{n}}\Big(\int_0^{2^{j-1}l(\Delta)} \Vert\partial_t\AP\Vert_\infty^2 t dt\Big)^{1/2}\nonumber
        \\
        &\lesssim \frac{1}{2^{j(n+1/2)}}\frac{1}{l(\Delta)^{n}}.\label{eq:Proofshortconeheight}
    \end{align}

    On the other hand we have 
    
    \begin{align}
        &\Big(\int_{\Gamma(y)\setminus\{t\leq 2^{j-1}l(\Delta)\}}\frac{|W_2(f)|^2}{t^{n+1}}dxdt\Big)^{1/2}\nonumber
        \\
        &=\Big(\int_{2^{j-1}l(\Delta)}^\infty \frac{1}{t^{n+1}}\int_{\Delta_{t/2}(y)}\Big|\int_0^t2\tau e^{-(t^2-\tau^2)L^t}\mathrm{div}(\partial_t\AP\nabla e^{-\tau^2L^t}f)d\tau\Big|^2dx dt\Big)^{1/2}\nonumber
        \\
        &\lesssim\Big(\int_{2^{j-1}l(\Delta)}^\infty \frac{1}{t^{n+1}}\Big(\int_0^t2\tau \Vert e^{-(t^2-\tau^2)L^t}\mathrm{div}(\partial_t\AP\nabla e^{-\tau^2L^t}f)\Vert_{L^2(\Delta_{t/2}(y))}d\tau\Big)^2 dt\Big)^{1/2}\nonumber
        \\
        &\lesssim\Big(\int_{2^{j-1}l(\Delta)}^\infty \frac{1}{t^{n+1}}\Big(\int_0^{t/2}\tau \Vert e^{-(t^2-\tau^2)L^t}\mathrm{div}(\partial_t\AP\nabla e^{-\tau^2L^t}f)\Vert_{L^2(\Delta_{t/2}(y))}d\tau\Big)^2 dt\Big)^{1/2}\nonumber
        \\
        &\qquad+ \Big(\int_{2^{j-1}l(\Delta)}^\infty \frac{1}{t^{n+1}}\Big(\int_{t/2}^t\tau \Vert e^{-(t^2-\tau^2)L^t}\mathrm{div}(\partial_t\AP\nabla e^{-\tau^2L^t}f)\Vert_{L^2(\Delta_{t/2}(y))}d\tau\Big)^2 dt\Big)^{1/2}.\label{eq:NonlocalConeEstimateW2}
    \end{align}

    For \(t/2\leq \tau\leq t\) we have
    \begin{align*}
        \tau\Vert e^{-(t^2-\tau^2)L^t}\mathrm{div}(\partial_t\AP\nabla e^{-\tau^2L^t}f)\Vert_{L^2(\Delta_{t/2}(y))}&\lesssim \frac{\tau}{\sqrt{t^2-\tau^2}}\Vert\partial_t A\Vert_{\infty}\Vert\nabla e^{-\tau^2L^t}f\Vert_{L^2(\mathbb{R}^n)}
        \\
        &\lesssim \frac{\tau}{\sqrt{t^2-\tau^2}\tau^{n+1}}\Vert\partial_t A\Vert_{\infty}\Vert f\Vert_{L^1(\mathbb{R}^n)}
        \\
        &\lesssim  \frac{1}{\sqrt{t^2-\tau^2}t^{n}}\Vert\partial_t A\Vert_{\infty}\Vert f\Vert_{L^1(\mathbb{R}^n)}
        \\
        &\lesssim  \frac{l(\Delta)^{n/2+1}}{\sqrt{t^2-\tau^2}t^{n}}\Vert\partial_t A\Vert_{\infty}\Vert \nabla_{||}f\Vert_{L^2(\mathbb{R}^n)}
        \\
        &\lesssim  \frac{l(\Delta)}{\sqrt{t^2-\tau^2}t^{n/2}}\Vert\partial_t A\Vert_{\infty}.
    \end{align*}

\begin{comment}
    \begin{align*})???
        &\tau\Vert e^{-(t^2-\tau^2)L^t}\mathrm{div}(\partial_t\AP\nabla e^{-\tau^2L^t}f)\Vert_{L^2(\Delta_{t/2}(y))}
        \\
        &\lesssim \tau\Vert\partial_t A\Vert_{\infty}\Vert \nabla e^{-(t^2-\tau^2)L^t}g \Vert_{L^2(\mathbb{R}^n)}\Big(\Vert\nabla e^{-\tau^2L^t}f\Vert_{L^2(2\Delta)} + \Vert\nabla e^{-\tau^2L^t}f\Vert_{L^2(\mathbb{R}^n\setminus 2\Delta)}\Big)
        \\
        &\lesssim \frac{\tau}{\sqrt{t^2-\tau^2}}\Vert\partial_t A\Vert_{\infty}\Big(\frac{l(\Delta)^{n/2}}{\tau^{n+1}}\Vert f\Vert_{L^1(\Delta)} + \frac{1}{\tau^{n+1}}\Vert f\Vert_{L^1(\Delta)}\big(\int_{l(\Delta)}^\infty r^{n-1}e^{-\frac{r^2}{\tau^2}}dr\big)^{1/2}\Big)
        \\
        &\lesssim \frac{\tau}{\sqrt{t^2-\tau^2}}\Vert\partial_t A\Vert_{\infty}\Big(\frac{l(\Delta)^{n/2}}{\tau^{n+1}}\Vert f\Vert_{L^1(\Delta)} + \frac{l(\Delta)^{n/2-1}}{\tau^{n}}\Vert f\Vert_{L^1(\Delta)}e^{-c\frac{l(\Delta)^2}{\tau^2}}\Big)
        \\
        &\lesssim \frac{\tau}{\sqrt{t^2-\tau^2}}\Vert\partial_t A\Vert_{\infty}\Big(\frac{l(\Delta)^{n+1}}{\tau^{n+1}}\Vert \nabla_{||}f\Vert_{L^2(\Delta)} + \frac{l(\Delta)^{n}}{\tau^{n}}\Vert \nabla_{||} f\Vert_{L^2(\Delta)}\Big)
        \\
        &\lesssim \frac{l(\Delta)^{n/2+1}}{\sqrt{t^2-\tau^2}t^{n/2}}\Vert\partial_t A\Vert_{\infty}\Vert\nabla_{||} f\Vert_{L^2}.
    \end{align*}
\end{comment}
    And for \(0\leq \tau\leq t/2\) we have for a dualising function \(g\in L^2(\Delta_{t/2}(y))\)
    \begin{align*}
        &\tau\Vert e^{-(t^2-\tau^2)L^t}\mathrm{div}(\partial_t\AP\nabla e^{-\tau^2L^t}f)\Vert_{L^2(\Delta_{t/2}(y))}
        \\
        &\lesssim \tau\Vert\partial_t A\Vert_{\infty}\Big(\Vert \nabla e^{-(t^2-\tau^2)L^t}g \Vert_{L^2(2\Delta)}\Vert\nabla e^{-\tau^2L^t}f\Vert_{L^2(2\Delta)}
        \\
        &\qquad\qquad + \Vert \nabla e^{-(t^2-\tau^2)L^t}g \Vert_{L^2(\mathbb{R}^n)}\Vert\nabla e^{-\tau^2L^t}f\Vert_{L^2(\mathbb{R}^n\setminus 2\Delta)}\Big)
        \\
        &\lesssim \tau\Vert\partial_t A\Vert_{\infty}\Big(\frac{l(\Delta)^{n/2}}{\sqrt{t^2-\tau^2}^{n+1}}\Vert g\Vert_{L^1(\Delta_{t/2}(y))}\Vert \nabla_{||} f\Vert_{L^2(\mathbb{R}^n)}
        \\
        &\qquad\qquad+ \frac{1}{\sqrt{t^2-\tau^2}\tau^{n+1}}\Vert g \Vert_{L^2}\Vert f\Vert_{L^1(\Delta)}\big(\int_{l(\Delta)}^\infty r^{n-1}e^{-\frac{r^2}{\tau^2}}dr\big)^{1/2}\Big)
        \\
        &\lesssim \tau\Vert\partial_t A\Vert_{\infty}\Big(\frac{l(\Delta)^{n/2}}{\sqrt{t^2-\tau^2}^{n+1}}\Vert g\Vert_{L^1(\Delta_{t/2}(y))}\Vert \nabla_{||} f\Vert_{L^2(\mathbb{R}^n)}
        \\
        &\qquad\qquad+ \frac{l(\Delta)^{n/2-1}}{\sqrt{t^2-\tau^2}\tau^{n}}\Vert g \Vert_{L^2}\Vert f\Vert_{L^1(\Delta)}e^{-c\frac{l(\Delta)^2}{\tau^2}}\Big)
        \\
        &\lesssim \tau\Vert\partial_t A\Vert_{\infty}\Big(\frac{l(\Delta)^{n/2}t^{n/2}}{\sqrt{t^2-\tau^2}^{n+1}}\Vert g\Vert_{L^2}\Vert \nabla_{||} f\Vert_{L^2(\mathbb{R}^n)} + \frac{l(\Delta)^{n}}{\sqrt{t^2-\tau^2}\tau^{n}}\Vert g \Vert_{L^2}\Vert \nabla_{||}f\Vert_{L^2(\Delta)}e^{-c\frac{l(\Delta)^2}{\tau^2}}\Big)
        \\
        &\lesssim \tau\Vert\partial_t A\Vert_{\infty}\Big(\frac{l(\Delta)^{n/2}t^{n/2}}{\sqrt{t^2-\tau^2}^{n+1}}\Vert \nabla_{||} f\Vert_{L^2(\mathbb{R}^n)} + \frac{l(\Delta)^n}{\sqrt{t^2-\tau^2}t^n}\Vert \nabla_{||}f\Vert_{L^2(\Delta)}\Big)
        \\
        &\lesssim  \frac{l(\Delta)^{n/2+1}}{\sqrt{t^2-\tau^2} t^{n/2}}\Vert\partial_t A\Vert_{\infty}\Vert\nabla_{||} f\Vert_{L^2(\mathbb{R}^n)}\lesssim \frac{l(\Delta)}{\sqrt{t^2-\tau^2} t^{n/2}}\Vert\partial_t A\Vert_{\infty}.
    \end{align*}
    Hence, we can combine these two observations to continue \eqref{eq:NonlocalConeEstimateW2} with
    \begin{align*}
        &\Big(\int_{2^{j-1}l(\Delta)}^\infty \frac{1}{t^{n+1}}\Big(\int_0^{t/2}\tau \Vert e^{-(t^2-\tau^2)L^t}\mathrm{div}(\partial_t\AP\nabla e^{-\tau^2L^t}f)\Vert_{L^2(\Delta_{t/2}(y))}d\tau\Big)^2 dt\Big)^{1/2}
        \\
        &\qquad+ \Big(\int_{2^{j-1}l(\Delta)}^\infty \frac{1}{t^{n+1}}\Big(\int_{t/2}^t\tau \Vert e^{-(t^2-\tau^2)L^t}\mathrm{div}(\partial_t\AP\nabla e^{-\tau^2L^t}f)\Vert_{L^2(\Delta_{t/2}(y))}d\tau\Big)^2 dt\Big)^{1/2}
        \\
        &\lesssim\Big(\int_{2^{j-1}l(\Delta)}^\infty \frac{l(\Delta)^{2}}{t^{2n+1}}\Vert\partial_t A\Vert_\infty^2\Big(\int_0^{t}\frac{1}{\sqrt{t^2-\tau^2}} d\tau\Big)^2 dt\Big)^{1/2}
        \\
        &\lesssim\Big(\int_{2^{j-1}l(\Delta)}^\infty \Vert\partial_t A\Vert_\infty^2 t dt\Big)^{1/2} \frac{1}{2^{j(n+1)}l(\Delta)^{n}}
        \\
        &\lesssim \frac{1}{2^{j(n+1)}l(\Delta)^{n}},
    \end{align*}
    whence together with \eqref{eq:Proofshortconeheight}
    \begin{align*}
        \Vert \mathcal{A}(W_2(f))\Vert_{L^1(\mathbb{R}^n\setminus 5\Delta)}&\lesssim \sum_{j\geq 2} \big(2^{j+1}l(\Delta)\big)^n\frac{1}{2^{j(n+1/2)}l(\Delta)^n}\leq C.
    \end{align*}

    Lastly, \eqref{item:HardySquareBoundNabla} follows from \eqref{item:HardySquareBound} and \refprop{prop:CacciopolliTypeInequality}.
\end{proof}

Again, above proof gives the corollary for a different operator satisfying the same bounds.
\begin{lemma}\label{lemma:HardyBoundP-MVP}
    Let \(f\in \dot{HS}_{atom}^{1,2}(\mathbb{R}^n)\). Then
    \[\Vert \mathcal{A}(t^{-1}(\mathcal{P}_tf-(\mathcal{P}_tf)_{\Delta(\cdot,t/2)}))\Vert_{L^1(\mathbb{R}^n)} \lesssim \Vert \nabla_{||}f\Vert_{\dot{HS}^{1,2}(\mathbb{R}^n)}.\]
\end{lemma}

\begin{proof}
    Again, we can follow the proof of \eqref{item:HardySquareBound} in \reflemma{lemma:HardyNormBounds} analogously. More precisely, the local part follows as in \eqref{item:HardySquareBound} by the \(L^2\) square function bound \reflemma{lemma:L^2SquareFctBoundP-MVP} and the nonlocal part has an analogous bound to \(I_1\) and uses that \(te^{-t^2L_{||}^t}\) and \(\frac{1}{t}\mathcal{P}_t\) satisfy the same kernel bounds (see \refprop{Prop:KernelBounds}).
\end{proof}

\section{Tent spaces and real interpolation}

Let \(A_0,A_1\) be two normed vector spaces of functions \(a:\mathbb{R}^n\to\mathbb{R}\). For each \(a\in A_0 + A_1\) we define the \(K-\)functional of real interpolation by
\[K(a,t,A_0,A_1)=\inf_{a=a_0+a_1}\Vert a_0\Vert_{A_0} + t\Vert a_1\Vert_{A_1}.\]
For \(0<\theta<1,1\leq q\leq \infty\), we denote by \((A_0,A_1)_{\theta,q}\) the real interpolation space between \(A_0\) and \(A_1\) defined as
\[(A_0,A_1)_{\theta,q}=\Big\{a\in A_1+A_0:\Vert a\Vert_{\theta,q}=\Big(\int_0^\infty (t^{-\theta}K(a,t,A_0,A_1))^q\frac{\mathrm{d}t}{t}\Big)^{1/q}<\infty \Big\}.\]
According to Theorem 3.1.2 in \cite{bergh_interpolation_1976}, \(K\) can be seen as an exact interpolation functor, which means that if an operator \(T\) is bounded from \(A_0\to B_0\) and from \(A_1\to B_1\) for linear normed vector spaces of functions, then
\(T:(A_0,A_1)_{\theta,q}\to (B_0,B_1)_{\theta,q}\) is a bounded linear operator with \(\Vert T\Vert\leq C\Vert T\Vert^{1-\theta}_{A_0\to B_0}\Vert T\Vert^{\theta}_{A_1\to B_1}\).  

Now, we have the following real interpolation result.
\begin{prop}[Thm 0.4 in \cite{badr_abstract_2010}]
    For every \(\beta\in(1,\infty]\) and \(1<p<\infty\) the real interpolation space is
    \[(\dot{HS}^{1,\beta}_{atom}, \dot{W}^{1,\infty})_{1-1/p,p}=\dot{W}^{1,p}.\]
\end{prop}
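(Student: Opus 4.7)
The plan is to use the \(K\)-functional characterization
\[
\|f\|_{(\dot{HS}^{1,\beta}_{atom},\dot{W}^{1,\infty})_{1-1/p,p}}^p = \int_0^\infty \bigl(t^{-(1-1/p)}K(f,t,\dot{HS}^{1,\beta}_{atom},\dot{W}^{1,\infty})\bigr)^p\,\frac{dt}{t}
\]
and reduce the task to sandwiching \(K(f,t,\cdot,\cdot)\) between expressions involving the distribution function of the Hardy-Littlewood maximal function applied to a Haj\l{}asz upper gradient of \(f\). The two inclusions are then obtained via a Calder\'{o}n-Zygmund decomposition adapted to Sobolev functions.

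For the inclusion \(\dot{W}^{1,p}\hookrightarrow (\dot{HS}^{1,\beta}_{atom},\dot{W}^{1,\infty})_{1-1/p,p}\), given \(f\in \dot{W}^{1,p}\) and \(t>0\), I would select a Whitney cover \(\{\Delta_j\}\) of the level set \(\Omega_t:=\{M(g)>t\}\), where \(g\) is an optimal Haj\l{}asz upper gradient of \(f\), together with a subordinate smooth partition of unity \(\{\varphi_j\}\) satisfying \(|\nabla\varphi_j|\lesssim l(\Delta_j)^{-1}\). Decompose
\[
f = h + \sum_j \lambda_j a_j, \qquad a_j := \lambda_j^{-1}(f-(f)_{\Delta_j})\varphi_j,
\]
where \(\lambda_j\) is chosen so that Poincar\'{e}'s inequality on \(\Delta_j\) in the \(L^\beta\)-version forces each \(a_j\) to be a Hardy-Sobolev \(\beta\)-atom on a fixed dilate of \(\Delta_j\), and \(h\) is defined as \(f\) outside \(\Omega_t\) and as \(\sum_j (f)_{\Delta_j}\varphi_j\) inside. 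One checks \(\|h\|_{\dot{W}^{1,\infty}}\lesssim t\) by combining the Lebesgue differentiation theorem off \(\Omega_t\) with Poincar\'{e} estimates on neighbouring Whitney balls, and the weak-\((1,1)\) bound for \(M\) gives \(\sum_j \lambda_j\lesssim t|\Omega_t|\). This yields \(K(f,t)\lesssim t|\Omega_t|+t\); substituting into the \(K\)-functional integral, interchanging the order of integration, and using the \(L^p\)-boundedness of \(M\) produces \(\|f\|_{\theta,p}\lesssim \|\nabla f\|_{L^p}\).

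The converse inclusion is obtained by running the same machinery in reverse: given any representation \(f = f_0+f_1\) with \(f_0\in\dot{HS}^{1,\beta}_{atom}\) and \(f_1\in \dot{W}^{1,\infty}\), the atomic structure of \(f_0\) (each atom supported in its ball with a controlled \(L^\beta\)-gradient) together with the Lipschitz bound on \(f_1\) control the distribution function of an upper gradient of \(f\) at height comparable to \(t\). Taking the infimum over decompositions and applying Marcinkiewicz-type interpolation on these level sets gives \(\|f\|_{\dot{W}^{1,p}}\lesssim \|f\|_{\theta,p}\).

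The main obstacle is the Calder\'{o}n-Zygmund construction in the Haj\l{}asz framework: the \(L^\beta\)-normalization of \(\nabla a_j\), the atomic support condition, and the Lipschitz control of the good part \(h\) must be verified simultaneously, and because \(\dot{L}^p_1(\mathbb{R}^n)\) is defined through upper gradients rather than classical distributional derivatives, fractional maximal operators must replace \(M\) on \(\Omega_t^c\) to recover a Haj\l{}asz-type gradient for \(h\). The full technical execution of these steps is carried out in \cite{badr_abstract_2010}.
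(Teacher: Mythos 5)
The paper itself does not prove this proposition: it is imported verbatim as Theorem 0.4 of \cite{badr_abstract_2010}, so the only meaningful comparison is with the proof in that reference. Your sketch does follow that reference's strategy in outline (Calder\'on--Zygmund decomposition of Sobolev functions on level sets of a maximal function, estimation of the \(K\)-functional, Hardy/maximal-function bounds), and your final deferral to \cite{badr_abstract_2010} is consistent with how the paper treats the statement.

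However, read as an argument, the sketch has concrete gaps. First, the central estimate ``\(K(f,t)\lesssim t|\Omega_t|+t\)'' is wrong as written: with \(\|h\|_{\dot W^{1,\infty}}\lesssim t\) the Lipschitz part enters the \(K\)-functional as \(t\cdot t=t^2\), and even after this correction, substituting \(t|\Omega_t|+t^2\) into \(\int_0^\infty\bigl(t^{-(1-1/p)}K(f,t)\bigr)^p\,dt/t\) fails: the \(t^2\) term produces the divergent integral \(\int_0^\infty t^{p}\,dt\), and the \(t|\Omega_t|\) term produces \(\int_0^\infty|\Omega_t|^p\,dt\), which is not \(\|Mg\|_{L^p}^p\). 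The missing idea is that the Calder\'on--Zygmund height must be decoupled from the \(K\)-parameter: one decomposes at height \(\lambda\approx (Mg)^*(t)\) (nonincreasing rearrangement, or a variant built from \((M(g^q))^{**}\)), which yields \(K(f,t)\lesssim\int_0^t (Mg)^*(s)\,ds + t\,(Mg)^*(t)\), and only then does Hardy's inequality give \(\|f\|_{1-1/p,p}\lesssim\|Mg\|_{L^p}\simeq\|g\|_{L^p}\). Second, the claim that an \(L^\beta\) Poincar\'e inequality turns the pieces \((f-(f)_{\Delta_j})\varphi_j\) into \(\beta\)-atoms does not follow from the decomposition you set up: on Whitney balls of \(\{Mg>t\}\) only the \(L^1\) average of \(g\) is controlled by \(t\), not the \(L^\beta\) average; obtaining \(\beta\)-atoms for all \(\beta\in(1,\infty]\) (in particular \(\beta=\infty\)) requires either decomposing on level sets of \(M(g^\beta)^{1/\beta}\) (which only works for \(\beta<p\)) or invoking the fact that the atomic spaces for different \(\beta\) coincide, which is itself part of what is being proved. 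Third, the converse inclusion is asserted rather than argued: since \(f\mapsto g\) is not linear, one cannot ``apply Marcinkiewicz-type interpolation''; the actual proof needs a reverse \(K\)-inequality, i.e.\ from any splitting \(f=f_0+f_1\) one must manufacture an upper gradient \(g_0+g_1\) with \(\|g_0\|_{L^1}\lesssim\|f_0\|_{\dot{HS}^{1,\beta}_{atom}}\) and \(\|g_1\|_{L^\infty}\lesssim\|f_1\|_{\dot W^{1,\infty}}\), and this is delicate because the Haj\l{}asz gradient of an atom is controlled by \(M|\nabla a|\), which need not be integrable, so one must exploit the support and normalization of the atoms. These are precisely the points where \cite{badr_abstract_2010} does the real work; if your intention is to cite that result (as the paper does), it would be cleaner to do so without the incorrect intermediate claims.
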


On the other hand, recall the definition of the area function \eqref{recall:areafunctiondef} and of the Carleson function \eqref{recall:carelsonfunctiondef} and define the tent spaces over \(\mathbb{R}^n\) with parameter \(1\leq p<\infty\) as
\begin{align*}
T^{p,2}(\Omega):=\{F\in L^2_{loc};\Vert F\Vert_{T^{p,2}}:=\Vert \mathcal{A}(F)\Vert_{L^p(\partial\Omega)}<\infty\}
\end{align*}
and
\begin{align*}
T^{\infty,2}(\Omega):=\{F\in L^2_{loc};\Vert F\Vert_{T^{\infty,2}}:=\Vert \mathcal{C}(F)\Vert_{L^\infty(\partial\Omega)}<\infty\}.
\end{align*}

By \cite{coifman_new_1985} we have
\begin{prop}[Theorem 4' in \cite{coifman_new_1985}]
    For every \(1<p<\infty\) the real interpolation space is
    \[(T^{1,2},T^{\infty,2})_{1-1/p,p}=T^{p,2}.\]
\end{prop}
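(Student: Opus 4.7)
The statement is the classical tent space real interpolation theorem of Coifman, Meyer, and Stein, and I would follow their original strategy, combining the atomic decomposition of \(T^{1,2}\) with the \(K\)-method description of the real interpolation space. The two structural ingredients are: (i) every \(F\in T^{1,2}\) admits a decomposition \(F=\sum_j\lambda_j a_j\) where each atom \(a_j\) is supported in a tent \(T(\Delta_j)\) and satisfies \(\|a_j\|_{L^2(\Omega,dx\,dt/t)}\le|\Delta_j|^{-1/2}\), with \(\sum_j|\lambda_j|\lesssim\|F\|_{T^{1,2}}\); and (ii) the duality pairing \(\langle F,G\rangle=\int_\Omega FG\,\frac{dx\,dt}{t}\) realizing \((T^{1,2})^{*}=T^{\infty,2}\). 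With these two tools the identification of the \(K\)-functional becomes tractable.

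For the embedding \(T^{p,2}\hookrightarrow(T^{1,2},T^{\infty,2})_{1-1/p,p}\) I would estimate the \(K\)-functional by a stopping time decomposition on the level sets of \(\mathcal{A}(F)\). Given \(\lambda>0\), set \(O_\lambda:=\{\mathcal{A}(F)>\lambda\}\), take a Whitney decomposition of a controlled enlargement \(O_\lambda^{*}\), and split \(F=F_\lambda^{\mathrm{loc}}+F_\lambda^{\mathrm{glob}}\) where \(F_\lambda^{\mathrm{loc}}\) is the restriction of \(F\) to the sawtooth \(\bigcup_{x\in O_\lambda^{*}}\Gamma(x)\). The Whitney pieces of \(F_\lambda^{\mathrm{loc}}\) are multiples of \(T^{1,2}\)-atoms with total mass \(\lesssim\lambda|O_\lambda|\), while cones based outside \(O_\lambda^{*}\) avoid the support of \(F_\lambda^{\mathrm{loc}}\), which forces \(\|F_\lambda^{\mathrm{glob}}\|_{T^{\infty,2}}\lesssim\lambda\). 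Integrating the resulting bound via the standard layer-cake identity \(\int_0^\infty(\lambda^{-(1-1/p)}K(F,\lambda))^p\frac{d\lambda}{\lambda}\) collapses to \(\|\mathcal{A}(F)\|_{L^p}^p\). The reverse embedding \((T^{1,2},T^{\infty,2})_{1-1/p,p}\hookrightarrow T^{p,2}\) follows from the \(K\)-method applied to the sublinear area functional: for any decomposition \(F=F_0+F_1\) the subadditivity \(\mathcal{A}(F)\le\mathcal{A}(F_0)+\mathcal{A}(F_1)\), together with the endpoint bounds \(\|\mathcal{A}(F_0)\|_{L^1}\lesssim\|F_0\|_{T^{1,2}}\) and \(\|\mathcal{A}(F_1)\|_{L^\infty}\lesssim\|F_1\|_{T^{\infty,2}}\), yields a Marcinkiewicz-type control on the distribution function of \(\mathcal{A}(F)\).

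The main obstacle, and the step I would devote the most care to, is the construction and estimation of the Carleson part \(F_\lambda^{\mathrm{glob}}\). The key geometric fact --- that cones anchored at points outside the enlargement \(O_\lambda^{*}\) avoid the tents built over the Whitney pieces of \(O_\lambda\) --- requires choosing the enlargement factor sufficiently large relative to the aperture of the cones defining \(\mathcal{A}\) and to the Vitali covering constant, so that the uniform Carleson estimate \(\frac{1}{|\Delta|}\int_{T(\Delta)}|F_\lambda^{\mathrm{glob}}|^2\frac{dx\,dt}{t}\lesssim\lambda^2\) can be verified for every boundary ball \(\Delta\). Once this geometric estimate is secured and the atomic decomposition of \(T^{1,2}\) is established, the remainder of the argument reduces to the formal machinery of the \(K\)-method.
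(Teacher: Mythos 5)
This proposition is quoted verbatim from Coifman--Meyer--Stein (Theorem 4' of \cite{coifman_new_1985}); the paper supplies no argument of its own, so your sketch must be measured against the original. Your treatment of the embedding $T^{p,2}\hookrightarrow(T^{1,2},T^{\infty,2})_{1-1/p,p}$ --- stopping times on level sets of $\mathcal{A}(F)$, Whitney pieces producing $T^{1,2}$-atoms with total mass $\lesssim\lambda|O_\lambda|$, and a Carleson-measure bound on the remainder coming from the fact that cones anchored away from the enlargement $O_\lambda^{*}$ see only low-energy data --- is a correct outline of the Coifman--Meyer--Stein argument, and your attention to the enlargement factor is the right instinct, though the precise input is a positive-density condition for $(O_\lambda)^{\mathsf{c}}$ in every ball together with a change-of-aperture comparison rather than bare avoidance.

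The reverse embedding contains a genuine gap. You invoke $\Vert\mathcal{A}(F_1)\Vert_{L^\infty}\lesssim\Vert F_1\Vert_{T^{\infty,2}}$ as an endpoint bound for the sublinear map $F\mapsto\mathcal{A}(F)$ and then run the standard Marcinkiewicz retraction. That inequality is false. The $T^{\infty,2}$ norm is the Carleson-box average $\sup_{\Delta}\bigl(\tfrac{1}{|\Delta|}\int_{T(\Delta)}|F|^2\,\tfrac{dx\,dt}{t}\bigr)^{1/2}$, and it places no uniform ceiling on the conical integral $\int_{\Gamma(x)}|F|^2\,\tfrac{dx\,dt}{t^{n+1}}$: a bounded Carleson constant still permits mass to pile up along a thin subcone over a fixed vertex $x$ and drive $\mathcal{A}(F)(x)$ to infinity. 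Coifman, Meyer and Stein flag this exact point: the comparison $\Vert\mathcal{A}(F)\Vert_{L^p}\lesssim\Vert\mathcal{C}(F)\Vert_{L^p}$ holds for $2<p<\infty$ with constants degenerating as $p\to\infty$, and the $L^\infty$ endpoint fails, which is precisely why $T^{\infty,2}$ is defined through $\mathcal{C}$ and not through $\mathcal{A}$. Consequently the naive $K$-method argument via $\mathcal{A}$ alone does not close. The genuine proof at this endpoint passes through either the duality $(T^{1,2})^{*}=T^{\infty,2}$ together with the atomic decomposition of $T^{1,2}$, or a good-$\lambda$/change-of-aperture comparison of $\mathcal{A}$ and $\mathcal{C}$ at the level of distribution functions; without one of these replacements the second half of your argument is not merely unoptimized but incorrect.
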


Since for \(1<p<\infty\) the two spaces \(\dot{W}^{1,p}(\mathbb{R}^n)\) and \(\dot{L}^{p}_1(\mathbb{R}^n)\) are the same modulo constants with comparable norms, \reflemma{lemma:CarlesonFunctionBoundW_1}, \reflemma{lemma:CarlesonFunctionBoundW_2}, and \reflemma{lemma:HardyNormBounds} give rise to the following corollary.

\begin{cor}\label{cor:AreaFctBoundsInLp}
    For every \(f\in \dot{L}^{p}_1(\partial\Omega)\) we have
    \[\Vert \mathcal{A}(\partial_t\mathcal{P}_tf)\Vert_{L^p(\partial\Omega)}, \Vert \mathcal{A}(t\nabla_{||}\partial_t\mathcal{P}_tf)\Vert_{L^p(\partial\Omega)}, \Vert \mathcal{A}(tL_{||}\mathcal{P}_tf)\Vert_{L^p(\partial\Omega)}\leq C\Vert \nabla_{||} f\Vert_{L^p(\partial\Omega)}.\]
\end{cor}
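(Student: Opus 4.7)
The plan is to combine the endpoint estimates already established with real interpolation on the tent-space/Sobolev-space couple. For each of the three operators
\[T_1 f = \partial_t \mathcal{P}_t f, \qquad T_2 f = t\nabla_{||}\partial_t \mathcal{P}_t f, \qquad T_3 f = tL_{||} \mathcal{P}_t f,\]
note that $T_1 = W_1 + W_2$, $T_3 = \tfrac{1}{2}W_1$, and all three annihilate constants (since $\mathcal{P}_t$ fixes constants), so each $T_i$ descends to a well-defined linear map on the relevant quotient spaces of functions modulo constants on $\partial\Omega$.

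The first step is to collect the endpoint bounds. For the $T^{\infty,2}$ endpoint, \reflemma{lemma:CarlesonFunctionBoundW_1} and \reflemma{lemma:CarlesonFunctionBoundW_2} together give
\[\Vert T_i f\Vert_{T^{\infty,2}(\Omega)} \lesssim \Vert \nabla_{||} f\Vert_{L^\infty(\mathbb{R}^n)} \simeq \Vert f\Vert_{\dot{W}^{1,\infty}(\mathbb{R}^n)/\mathbb{R}},\]
for $i=1,2,3$ (the estimate for $T_3$ comes from the $W_1$-part of \reflemma{lemma:CarlesonFunctionBoundW_1}, and the $T_2$ bound combines both Carleson lemmas). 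For the $T^{1,2}$ endpoint, \reflemma{lemma:HardyNormBounds} gives
\[\Vert T_i f\Vert_{T^{1,2}(\Omega)} \lesssim \Vert f\Vert_{\dot{HS}^{1,2}_{atom}(\mathbb{R}^n)}\]
for $T_1$ and $T_2$; the estimate for $T_3 = \tfrac{1}{2}W_1$ is exactly the $I_1$ computation extracted from the proof of that lemma (the proof splits into the $W_1$-piece and the $W_2$-piece and treats them separately, so the $W_1$ bound alone is available).

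The second step is to invoke real $K$-interpolation. Since the $K$-functor is an exact interpolation functor (Theorem 3.1.2 of Bergh--L\"ofstr\"om), boundedness of $T_i$ on the two endpoints yields, for every $1<p<\infty$,
\[T_i : (\dot{HS}^{1,2}_{atom}, \dot{W}^{1,\infty})_{1-1/p,\,p} \longrightarrow (T^{1,2}, T^{\infty,2})_{1-1/p,\,p}.\]
Combining the two interpolation identifications cited just before the corollary,
$(\dot{HS}^{1,2}_{atom}, \dot{W}^{1,\infty})_{1-1/p,p} = \dot{W}^{1,p}$ and $(T^{1,2}, T^{\infty,2})_{1-1/p,p} = T^{p,2}$, together with the identification $\dot{L}^p_1(\mathbb{R}^n) \simeq \dot{W}^{1,p}(\mathbb{R}^n)$ (modulo constants, with comparable norms, valid because the boundary is flat), converts the interpolated boundedness into the desired area-function estimate
\[\Vert \mathcal{A}(T_i f)\Vert_{L^p(\partial\Omega)} \lesssim \Vert \nabla_{||} f\Vert_{L^p(\partial\Omega)}.\]

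No new analytic work is required; the only thing to check beyond the bookkeeping is that the three operators are genuinely defined on the sum space $\dot{HS}^{1,2}_{atom} + \dot{W}^{1,\infty}$, which follows from the fact that each atom lies in $L^2$ (so the semigroup is applied in its natural sense) while elements of $\dot{W}^{1,\infty}$ are handled in the distributional/semigroup sense already used in \reflemma{lemma:CarlesonFunctionBoundW_1} and \reflemma{lemma:CarlesonFunctionBoundW_2}. The main potential pitfall---and the only place where the argument could fail---is ensuring that the two endpoint pairs form compatible interpolation couples; this follows immediately since both pairs are continuously embedded into the space of distributions modulo constants on the appropriate domain.
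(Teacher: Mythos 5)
Your proposal is correct and is essentially the same real-interpolation argument the paper has in mind: collect the $T^{\infty,2}$ endpoint from Lemmas \ref{lemma:CarlesonFunctionBoundW_1} and \ref{lemma:CarlesonFunctionBoundW_2}, the $T^{1,2}$ endpoint from Lemma \ref{lemma:HardyNormBounds} (with the $W_1$-only pieces extracted for $tL_{||}\mathcal{P}_tf=\tfrac12 W_1 f$), and feed them through the exact $K$-functor together with the two interpolation identifications $(\dot{HS}^{1,2}_{atom},\dot{W}^{1,\infty})_{1-1/p,p}=\dot{W}^{1,p}$ and $(T^{1,2},T^{\infty,2})_{1-1/p,p}=T^{p,2}$. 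Your additional checks (that each $T_i$ annihilates constants so it acts on the homogeneous quotient spaces, and that the two couples are compatible) are exactly the implicit bookkeeping the paper leaves unstated.
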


In addition to that, the real interpolation method implies via \refcor{cor:CarlesonFunctionBoundP-MVP} and \refcor{lemma:HardyBoundP-MVP} also
\begin{cor}\label{cor:AreaFctBoundsInLp2}
    For every \(f\in \dot{L}^{p}_1(\partial\Omega)\) we have
    \[\Big\Vert \mathcal{A}\Big(\frac{\mathcal{P}_tf-(\mathcal{P}_tf)_{\Delta(\cdot,t/2)}}{t}\Big)\Big\Vert_{L^p(\partial\Omega)}\leq C\Vert \nabla_{||} f\Vert_{L^p(\partial\Omega)}.\]
\end{cor}

\section{Proof of \refthm{MAINTHM}}

Let \(h\) be a quasi dualising function given by the following lemma:
\begin{lemma}[Lemma 2.8 in \cite{kenig_neumann_1995}]\label{lemma:DefAndPropertiesOfH}
    There exists a function \(h:\mathbb{R}^{n+1}\to\mathbb{R}^{n+1}\) such that
    \[\Vert \tilde{N}(\nabla u)\Vert_{L^p(\partial\Omega)}\lesssim \int_\Omega \nabla u \cdot h dxdt,\]
    and
    \[ \int_\Omega F \cdot h dxdt\lesssim \Vert \tilde{N}(F)\Vert_{L^p(\partial\Omega)} \]
    for every vector valued function \(F:\mathbb{R}^{n+1}\to\mathbb{R}^{n+1}\).
\end{lemma}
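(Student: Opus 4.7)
The plan is to construct $h$ by an $L^p$-$L^{p'}$ duality argument followed by a measurable selection inside each nontangential cone; the key observation is that the sign and coordinate structure of $\nabla u$ allow us to linearise the modulus $|\cdot|$ appearing inside the averaged nontangential maximal function.

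First, by $L^p$-$L^{p'}$ duality on $\partial\Omega$, pick a non-negative $g\in L^{p'}(\partial\Omega)$ with $\Vert g\Vert_{L^{p'}} = 1$ and $\int_{\partial\Omega} g\,\tilde{N}(\nabla u)\,dx\geq \tfrac{1}{2} \Vert \tilde{N}(\nabla u)\Vert_{L^p(\partial\Omega)}$. By the very definition of $\tilde{N}$, for each $x\in\partial\Omega$ one can pick $(y_x,s_x)\in\Gamma(x)$ with $\fint_{B_x}|\nabla u|\geq \tfrac{1}{2}\tilde{N}(\nabla u)(x)$, where $B_x:=B(y_x,s_x,s_x/2)\subset\Omega$. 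Next, by pigeonhole on the $n+1$ components of $\nabla u$, select $j(x)\in\{1,\dots,n+1\}$ with $\fint_{B_x}|\partial_{j(x)}u|\geq\frac{1}{n+1}\fint_{B_x}|\nabla u|$, and pick $\epsilon_x\in\{\pm1\}$ together with the set $B_x^+:=\{(z,\tau)\in B_x:\epsilon_x\partial_{j(x)}u(z,\tau)\geq 0\}$ so that $\int_{B_x^+}\epsilon_x\partial_{j(x)}u\geq\tfrac{1}{2}\int_{B_x}|\partial_{j(x)}u|$. All these selections can be made Borel measurable in $x$ by stratifying the relevant sets and applying the Yankov–von Neumann measurable selection theorem.

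With these choices in hand, define
\[
h(z,\tau):=\int_{\partial\Omega} g(x)\,\epsilon_x\, \vec{e}_{j(x)}\, \frac{\chi_{B_x^+}(z,\tau)}{|B_x|}\, dx,
\]
where $\vec{e}_j$ denotes the $j$-th standard basis vector in $\mathbb{R}^{n+1}$. By Fubini's theorem and the observation that $\epsilon_x\partial_{j(x)}u=|\partial_{j(x)}u|$ on $B_x^+$,
\[
\int_\Omega \nabla u\cdot h\, dz\, d\tau = \int_{\partial\Omega} g(x)\, \frac{1}{|B_x|}\int_{B_x^+}|\partial_{j(x)}u|\,dz\,d\tau\,dx \gtrsim \int_{\partial\Omega} g(x)\,\tilde{N}(\nabla u)(x)\,dx\gtrsim \Vert \tilde{N}(\nabla u)\Vert_{L^p(\partial\Omega)},
\]
applying the three selection properties for $(y_x,s_x)$, $j(x)$, and $\epsilon_x$ in turn. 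For the second inequality, an analogous Fubini computation followed by $|F_{j(x)}|\leq|F|$, $B_x^+\subset B_x$, and $(y_x,s_x)\in\Gamma(x)$ yields
\[
\left|\int_\Omega F\cdot h\,dz\,d\tau\right|\leq\int_{\partial\Omega} g(x)\fint_{B_x}|F|\,dz\,d\tau\,dx\leq\int_{\partial\Omega} g(x)\,\tilde{N}(F)(x)\,dx\leq\Vert \tilde{N}(F)\Vert_{L^p(\partial\Omega)},
\]
by H\"older's inequality and $\Vert g\Vert_{L^{p'}}=1$.

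The main obstacle is ensuring the joint measurability of the selection $x\mapsto (y_x,s_x,j(x),\epsilon_x,B_x^+)$; once this is in place the rest of the proof is essentially algebraic manipulation with Fubini. It is worth emphasising that $h$ depends on $u$ through $\vec{e}_{j(x)}$ and $B_x^+$, which is fully consistent with the statement of the lemma, since the second estimate must hold for arbitrary $F$ but only against this one fixed, $u$-dependent $h$.
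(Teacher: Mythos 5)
Your construction is correct and is essentially the standard duality argument behind Lemma 2.8 of Kenig--Pipher, which the paper cites without reproving: choose a dualising \(g\in L^{p'}\), near-maximising balls \(B_x\) in each cone, and average the indicators \(\chi_{B_x}/|B_x|\) against \(g\); your pigeonhole on components and signs is a harmless variant of the usual choice \(\vec{h}_x=\nabla u/|\nabla u|\), and the measurability of the selection is the standard technicality you correctly flag (it follows since \((y,s)\mapsto\fint_{B(y,s,s/2)}|\nabla u|\) is continuous on the open cone, so one may select over a countable dense family). Both required inequalities then follow exactly as you compute via Fubini and H\"older.
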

Now let \(v\) be the solution to the Poisson problem of the adjoint operator
\begin{align}\begin{cases} L^*v=\mathrm{div}(h) &\textrm{in }\Omega,\\ v=0 &\textrm{on }\partial\Omega.\end{cases}\label{PDEForv}\end{align}
For this \(v\) we have the following bounds
\begin{prop}[\cite{kenig_neumann_1995}] 
    Let \(v\) be given by \eqref{PDEForv}. If the elliptic measure \(\omega\in A_\infty(\sigma)\) then there exists some \(1<r<\infty\) such that
    \begin{align}\Vert S(v)\Vert_{L^r(\partial\Omega)},\Vert \tilde{N}(v)\Vert_{L^r(\partial\Omega)}, \Vert \tilde{N}(\delta|\nabla v|)\Vert_{L^r(\partial\Omega)}\lesssim C.\label{eq:BoundsOnV}\end{align}
\end{prop}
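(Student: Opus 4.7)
The strategy is to exploit the equivalence between $\omega\in A_\infty(\sigma)$ and $L^{r'}$-solvability of the Dirichlet problem for the adjoint operator $L^*$ for some $r'\in(1,\infty)$, and then to bootstrap this adjoint solvability into bounds on the Poisson solution $v$ using Green's representation together with the dualising property of $h$. The exponent $r$ appearing in \eqref{eq:BoundsOnV} will be the Hölder conjugate $r=(r')'$, which lies in $(1,\infty)$. The three bounds will be established in a chain: first $\tilde N(v)$, then $\tilde N(\delta|\nabla v|)$ via Caccioppoli, and finally $S(v)$ via the square function / nontangential maximal function equivalence.

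First I would bound $\tilde N(v)$. Since $v|_{\partial\Omega}=0$ and $L^*v=\mathrm{div}(h)$, Green's representation gives
\[
v(X) = -\int_\Omega \nabla_Y G^*(X,Y)\cdot h(Y)\,dY,
\]
with $G^*$ the Green's function for $L^*$. Testing against $g\in L^{r'}(\partial\Omega)$ and interchanging integrals converts the estimate of $\|\tilde N(v)\|_{L^r}$ into a pairing of $h$ with $\nabla w_g$, where $w_g$ is the $L^*$-solution with boundary data $g$. The second property of $h$ from \reflemma{lemma:DefAndPropertiesOfH} bounds this pairing by $\|\tilde N(\nabla w_g)\|_{L^{r'}(\partial\Omega)}$, which in turn is controlled by $\|g\|_{L^{r'}}$ via the regularity problem for $L^*$ dual to $(D)^{L}_r$.

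Next, for $\tilde N(\delta|\nabla v|)$, interior Caccioppoli on balls $B(X,\delta(X)/2)\subset\Omega$ yields
\[
\delta(X)^2\fint_{B(X,\delta(X)/4)}|\nabla v|^2 \lesssim \fint_{B(X,\delta(X)/2)}|v|^2 + \delta(X)^2\fint_{B(X,\delta(X)/2)}|h|^2,
\]
whence $\tilde N(\delta|\nabla v|)\lesssim \tilde N(v)+\tilde N(\delta h)$ pointwise, so the $L^r$ bound on the left reduces to the already-obtained bound on $\tilde N(v)$ plus a routine bound on $\tilde N(\delta h)$. The $S(v)$ bound then follows by the classical square function / nontangential maximal function equivalence for solutions of the adjoint Poisson problem with zero boundary trace, together with the control of an area-type remainder term in $h$ by the dualising property of \reflemma{lemma:DefAndPropertiesOfH}.

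The main obstacle is the $\tilde N(v)$ step: $h$ is only a quasi-dualising object controlling pairings against nontangential maximal functions, not a concrete $L^{r'}$ function, so every estimate must be formulated as a pairing rather than a norm inequality. A secondary subtlety is that the exponent $r$ is not explicit but emerges from the reverse-Hölder exponent of $\omega\in A_\infty(\sigma)$ and can be small (close to $1$); propagating this exponent through the chain of estimates while keeping the duality exact requires care.
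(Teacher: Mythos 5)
The paper does not prove this proposition: it is stated as a citation to Kenig--Pipher's 1995 Neumann paper (the same source as \reflemma{lemma:DefAndPropertiesOfH}), so there is no in-paper argument to compare against. What you have produced is a reconstruction of the cited result, and while the broad shape of your chain (nontangential bound on $v$, then Caccioppoli for $\delta|\nabla v|$, then the square function) is in the right spirit, the sketch glosses over the places where Kenig--Pipher's argument actually does the work, and those gaps are not cosmetic.

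First, the $\tilde N(v)$ step: $\tilde N$ is a nonlinear operator (a supremum over a cone), so "testing against $g\in L^{r'}(\partial\Omega)$ and interchanging integrals" does not directly turn $\|\tilde N(v)\|_{L^r}$ into a bilinear pairing with $\nabla w_g$. To make this rigorous one either needs a good-$\lambda$ argument, or a pointwise bound $|v(X)|\lesssim(\cdots)$ that uses Green's function decay together with the explicit Carleson-box structure and $L^\infty$ bound on $h$ from the Kenig--Pipher construction --- not merely the abstract dualising property of $h$ recorded in \reflemma{lemma:DefAndPropertiesOfH}. Second, in the Caccioppoli step the error term $\tilde N(\delta h)$ is not "routine": controlling it again requires the explicit construction of $h$ (compact support in a Carleson box, pointwise bounds tied to a stopping-time decomposition), which the abstract statement of \reflemma{lemma:DefAndPropertiesOfH} does not supply. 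Third, and most seriously, the "classical square function / nontangential maximal function equivalence" you invoke for $S(v)$ holds for solutions of a homogeneous equation $L^*w=0$, not for the inhomogeneous Poisson problem $L^*v=\operatorname{div}(h)$; the inhomogeneity produces additional terms whose control is precisely the content of the KP lemmas you are trying to reprove, so citing the equivalence is circular. In short, your proposal is a plausible high-level outline of the Kenig--Pipher proof but is not a self-contained argument, because each of the three steps silently relies on structural information about $h$ and $v$ that the paper only imports by reference.
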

Since \cite{ulmer_mixed_2024} showed that \(\omega\in A_\infty(\sigma)\) holds under our assumptions, there exists some \(p>1\) such that the bounds \eqref{eq:BoundsOnV} hold for the dual exponent \(r=p'\).

Then we have for this \(p\)
\begin{align}
    \Vert \tilde{N}(\nabla u)\Vert_{L^p}&\lesssim \int_\Omega \nabla u \cdot h dxdt= \int_\Omega A\nabla u\cdot \nabla v dxdt + \int_{\partial\Omega}A^*\nabla v \cdot u\nu dx\nonumber
    \\
    &=\int_{\partial \Omega}u b\cdot\nabla_{||}v + ud\partial_t v dx\nonumber
    \\
    &=\int_{\partial \Omega}u(x,0) \Big(\int_0^\infty \partial_t\big(b(x,t)\cdot\nabla_{||}v(x,t) + d(x,t)\partial_t v(x,t)\big) dt\Big) dx\label{eq:mainProof1}
\end{align}
Without loss of generality we can approximate all involved components of the matrix \(A\) and \(h\) by smooth functions so that the PDE \eqref{PDEForv} can be used pointwise. The following arguments are independent of this approximation and hence we can take the limit in the very end, but we omit this argument here. Since \(\int_0^\infty \partial_th(x,t)dt=0\)  we can continue 

\begin{align*}
    \eqref{eq:mainProof1}&=\int_{\partial \Omega}u \Big(\int_0^\infty \mathrm{div}_{||}\big(\AP^*(x,t)\cdot\nabla_{||}v(x,t) + c(x,t)\partial_t v(x,t)\big)dt
    \\
    &\qquad + \int_0^\infty\mathrm{div}_{||}(h(x,t)) + \partial_t h(x,t) dt\Big) dx
    \\
    &=\int_{\partial \Omega}\nabla_{||}u(x,0) \Big(\int_0^\infty\AP(x,t)\cdot\nabla_{||}v(x,t) + c(x,t)\partial_t v(x,t)dt\Big)dx
    \\
    &\qquad + \int_{\partial \Omega}\nabla_{||}u(x,0)\cdot h(x,t)dxdt.
\end{align*}

The last term yields by \reflemma{lemma:DefAndPropertiesOfH}
\begin{align*}
    \int_{\partial \Omega}\nabla_{||}u(x,0)\cdot h(x,t)dxdt\lesssim \Vert M[\nabla_{||} f]\Vert_{L^p}\lesssim \Vert \nabla_{||} f\Vert_{L^p}. 
\end{align*}
For the first term we can set 
\[V(x,s):=\int_s^\infty \AP(x,t)\cdot\nabla_{||}v(x,t) + c(x,t)\partial_t v(x,t)dt,\]
and get
\begin{align*}
    &\int_{\partial \Omega}\nabla_{||}u(x,0) \Big(\int_0^\infty\AP(x,t)\cdot\nabla_{||}v(x,t) + c(x,t)\partial_t v(x,t)dt\Big)dx
    \\
    &\qquad=\int_{\partial\Omega}\nabla_{||}f(x) V(x,0) dx \lesssim \int_{\Omega}\partial_s(\nabla_{||}\mathcal{P}_sf(x) V(x,s))dxds
    \\
    &\qquad\lesssim \int_{\Omega}\partial_s\nabla_{||}\mathcal{P}_sf(x)\cdot V(x,s) + \nabla_{||}\mathcal{P}_sf(x)\cdot \partial_s V(x,s) dxds
    \\
    &\qquad\lesssim \int_{\Omega}\partial_{ss}\nabla_{||}\mathcal{P}_sf(x)\cdot V(x,s)sdxds + \int_{\Omega}\nabla_{||}\partial_s\mathcal{P}_sf(x) \cdot\partial_s V(x,s)sdxds
    \\
    &\qquad \qquad + \int_{\Omega}\nabla_{||}\mathcal{P}_sf(x) \cdot\partial_{s} V(x,s)dxds
    \\
    &\qquad =:I+II+III.
\end{align*}
First, we observe that \(|\partial_s V(x,s)|\lesssim |\nabla v(x,s)|\) and hence
\begin{align*}
    II&\lesssim \Vert \mathcal{A}(s\partial_s\nabla_{||}\mathcal{P}_sf)\Vert_{L^p}\Vert S(v)\Vert_{L^{p'}}\lesssim \Vert \nabla_{||}f\Vert_{L^p},
\end{align*}
where we used \refcor{cor:AreaFctBoundsInLp} and \eqref{eq:BoundsOnV}.

\medskip
\begin{comment}
\color{blue}
Next, we have using the PDE \eqref{PDEForv}
\begin{align*}
    III&=\int_{\Omega}\nabla_{||}\mathcal{P}_sf(x) \cdot (A_{||}(x,s)\nabla_{||}v(x,s) + c(x,s)\partial_s v(x,s)) dxds
    \\
    &=\int_{\Omega}\mathcal{P}_sf(x)  \partial_s(b(x,s)\nabla_{||}v(x,s) + d(x,s)\partial_s v(x,s)) + \mathcal{P}_sf(x)\mathrm{div}(h(x,s))dxds
    \\
    &=\int_{\Omega}\partial_s\mathcal{P}_sf(x)  (b(x,s)\cdot\nabla_{||}v(x,s) + d(x,s)\partial_s v(x,s)) + \nabla\mathcal{P}_sf(x)h(x,s)dxds
    \\
    &\lesssim \Vert \mathcal{A}(\partial_s\mathcal{P}_sf)\Vert_{L^p}\Vert S(v)\Vert_{L^{p'}} + \Vert \tilde{N}(\mathcal{P}_sf)\Vert_{L^p}.
\end{align*}
By \eqref{eq:BoundsOnV},\refcor{cor:AreaFctBoundsInLp} and \refcor{cor:NontangentialBoundsOnW_1andW_2} we obtain boundedness of \(III\) by \(\Vert\nabla_{||}f\Vert_{L^p}\).
\color{black}
\end{comment}
Next, we have
\begin{align*}
    III&=\int_{\Omega}\nabla_{||}\mathcal{P}_sf(x) \cdot \partial_s(A_{||}(x,s)\nabla_{||}v(x,s) + c(x,s)\partial_s v(x,s))s dxds
    \\
    &\lesssim \int_{\Omega}\nabla_{||}\mathcal{P}_sf \cdot \partial_sA_{||}(x,s)\nabla_{||}v(x,s) s +L^s_{||}\mathcal{P}_sf(x) \partial_s v(x,s)sdxds
    \\
    &\qquad+ \int_{\Omega}\partial_s\nabla_{||}\mathcal{P}_sf \cdot c(x,s)\partial_s v(x,s)s + \nabla_{||}\mathcal{P}_sf \cdot c(x,s)\partial_s v(x,s) dxds
    \\
    &\lesssim \Vert \mathcal{A}(s\partial_s \AP\nabla_{||}\mathcal{P}_sf)\Vert_{L^p}\Vert S(v)\Vert_{L^{p'}} + \Vert \mathcal{A}(sL^s_{||}\mathcal{P}_sf)\Vert_{L^p}\Vert S(v)\Vert_{L^{p'}}
    \\
    &\qquad + \Vert \mathcal{A}(s\partial_s\nabla_{||}\mathcal{P}_sf)\Vert_{L^p}\Vert S(v)\Vert_{L^{p'}} + \Vert \mathcal{A}(\nabla_{||}\mathcal{P}_sf)\Vert_{L^p}\Vert S(v)\Vert_{L^{p'}}.
\end{align*}

To bound all these expressions, we combine \refcor{cor:AreaFctBoundsInLp}, \eqref{eq:BoundsOnV} and \eqref{cond:mixedL1LInftyCond}. For the second and third term we immediately get a bound of \(\Vert\nabla_{||}f\Vert_{L^p}\).
For the first term however, we can make use of \eqref{eq:NablaBounded} and obtain that
\[\mathcal{A}(s\partial_s\AP\nabla_{||}\mathcal{P}_sf)(x)\lesssim M[\nabla_{||}f](x)\Big(\int_0^\infty\Vert \partial_s \AP(\cdot,s)\Vert_{L^\infty(\mathbb{R}^n)}^2sds\Big)^{1/2},\]
whence with \eqref{cond:mixedL1LInftyCond} we obtain the same bound \(\Vert\nabla_{||}f\Vert_{L^p}\). 

And for the last term we observe that
\begin{align*} 
\Vert \mathcal{A}(\nabla_{||}\mathcal{P}_sf)\Vert_{L^p}^p&\leq \Vert \mathcal{A}_2(W_1f)\Vert_{L^p} + \Vert \mathcal{A}_2(\mathcal{P}_sf-(\mathcal{P}_sf)_{\Delta(y,s/2)})\Vert_{L^p}\lesssim \Vert\nabla_{||}f\Vert_{L^p},
\end{align*}
where the aperture of the area functions get doubled. However, since the \(L^p\) norms of area functions with different apertures are comparable, we obtain the last inequality with \refcor{cor:AreaFctBoundsInLp2}.
\medskip

Lastly, for \(I\)
\begin{align*}
    I&=\int_{\Omega}s\nabla_{||}\partial_{ss}\mathcal{P}_sf(x)\cdot\Big(\int_s^\infty \AP(x,t)\nabla_{||}v(x,t) + c(x,t)\partial_tv(x,t)dt\Big) dx
    \\
    &=\int_{\Omega}\nabla_{||}\Big(\int_0^ts\partial_{ss}\mathcal{P}_sf(x)ds\Big)\cdot(\AP(x,t)\nabla_{||}v(x,t) + c(x,t)\partial_tv(x,t)) dxdt
    \\
    &=\int_{\Omega}\nabla_{||}\big(t\partial_{t}\mathcal{P}_tf(x)-\mathcal{P}_tf(x)+f(x)\big)\cdot\big(\AP(x,t)\nabla_{||}v(x,t) + c(x,t)\partial_tv(x,t)\big) dxdt
    \\
    &\lesssim \Vert\mathcal{A}(t\nabla_{||}\partial_t \mathcal{P}_tf)\Vert_{L^p}\Vert S(v)\Vert_{L^{p'}}
    \\
    &\qquad + \Big|\int_{\Omega}\nabla_{||}\big(\mathcal{P}_tf(x)-f(x)\big)\cdot\big(\AP(x,t)\nabla_{||}v(x,t) + c(x,t)\partial_tv(x,t)\big) dxdt\Big|.
\end{align*}
For the last term we have by using the PDE \eqref{PDEForv} and that \((\mathcal{P}f-f)|_{\partial\Omega}=0\)
\begin{align*}
    &\int_{\Omega}\nabla_{||}\big(\mathcal{P}_tf(x)-f(x)\big)\cdot\big(\AP(x,t)\nabla_{||}v(x,t) + c(x,t)\partial_tv(x,t)\big) dxdt
    \\
    &\qquad= \int_{\Omega}\partial_t\mathcal{P}_tf(x)\cdot\big(b(x,t)\nabla_{||}v(x,t) + d(x,t)\partial_tv(x,t)\big)
    \\
    &\qquad \qquad + (\nabla_{||}\mathcal{P}_tf(x) - \nabla_{||}f, \partial_t \mathcal{P}_tf(x))\cdot h(x,t) dxdt
    \\
    &\qquad \lesssim  \Vert\mathcal{A}(\partial_t \mathcal{P}_tf)\Vert_{L^p}\Vert S(v)\Vert_{L^{p'}} +  \Vert\tilde{N}(\partial_t \mathcal{P}_tf)\Vert_{L^p}+\Vert\tilde{N}(\nabla_{||} \mathcal{P}_tf)\Vert_{L^p}\lesssim \Vert \nabla_{||}f\Vert_{L^p}.
\end{align*}
Here we used again the established bounds in \refcor{cor:AreaFctBoundsInLp}, \eqref{eq:BoundsOnV}, \eqref{eq:NablaBounded}, \refcor{cor:NontangentialBoundsOnW_1andW_2} and that \(\Vert\tilde{N}(\partial_t \mathcal{P}_tf)\Vert_{L^p}\lesssim \Vert\mathcal{A}(\partial_t\mathcal{P}_tf)\Vert_{L^p}\). This finishes the proof of \refthm{MAINTHM}.

\begin{comment}
We notice that (???)
\begin{align*}
    L^t_{||}(\mathcal{P}_tf-f)(x)=L^t_{||}\Big(\int_0^t \partial_se^{-s^2L^t_{||}}f(x) ds\Big)=\int_0^t \partial_s(L^t_{||}e^{-s^2L^t_{||}}f(x)) ds
\end{align*}
and hence for the remaining integral term we obtain by integration by parts
\begin{align*}
     &\int_{\Omega}\nabla_{||}\big(\mathcal{P}_tf(x)-f(x)\big)\cdot\big(\AP(x,t)\nabla_{||}v(x,t) + c(x,t)\partial_tv(x,t)\big) dxdt
     \\
     &=\int_{\Omega}L^t_{||}(\mathcal{P}_tf-f)(x)v(x,t)  - (\mathcal{P}_tf-f)(x)\partial_tc(x,t)\partial_tv(x,t) - \partial_t\mathcal{P}_tf(x)c(x,t)v(x,t) dxdt
     \\
     &\lesssim \Vert \mathcal{A}(t\nabla_{||}\partial_t\mathcal{P}_tf)\Vert_{L^p}\Vert S(v)\Vert_{L^{p'}} + \Vert \mathcal{C}(\partial_t c)\Vert_{L^\infty}\Vert \tilde{N}(\mathcal{P}_tf-f)\Vert_{L^p}\Vert \tilde{N}(v)\Vert_{L^{p'}} + \Vert\mathcal{A}(\partial_t\mathcal{P}_tf)\Vert_{L^p}\Vert S(v)\Vert_{L^{p'}} (???).
\end{align*}
\end{comment}

%==================
%\input{Discoard}

\begin{comment}
\section{Conflict of interest statement}
On behalf of all authors, the corresponding author states that there is no conflict of interest.
\end{comment}

\bibliographystyle{alpha}
\bibliography{references} 
\end{document}